\newcommand{\R}{\mathbb{R}}
\newcommand{\fmax}{f_{\text{max}}}
\newcommand{\edge}{\textsc{e}}
\newcommand{\vertex}{\textsc{v}}
\newcommand{\Dx}{\Delta x}
\newcommand{\Dt}{\Delta t}
\newcommand{\demand}{\textsc{d}}
\newcommand{\supply}{\textsc{s}}
\newcommand{\Nc}{J}
\newcommand{\Nce}{J_{\edge}}
\newcommand{\Nceuno}{J_{\edge_1}}
\newcommand{\Ncedue}{J_{\edge_2}}
\newcommand{\NT}{N_T}
\newcommand{\Nnl}{\ell}
\newcommand{\Np}{n_{p}}
\newcommand{\Nvp}{n^\vertex_{p}}
\newcommand{\Nvinc}{n^\vertex_\text{inc}}
\newcommand{\Nvout}{n^\vertex_\text{out}}
\newcommand{\GW}{\mathcal G_{\mathcal N}^\Delta}
\newcommand{\mis}[1]{\nu^{#1}}
\newcommand{\einc}{\edge}
\newcommand{\eout}{\edge^\prime}
\newcommand{\Hnorm}{\hat{\mathcal{H}}}
\newcommand{\REV}[1]{\textcolor{black}{#1}}
\newtheorem{thm}{Theorem}[section]
\newtheorem{prop}[thm]{Proposition}
\newtheorem{defn}[thm]{Definition}
\newtheorem{rem}[thm]{Remark}
\begin{document}

\title{Sensitivity analysis of the LWR model \\ for traffic forecast on large networks \\ using Wasserstein distance%\thanks{}
}

%For each author, make a block with the following four macros:

\author{Maya Briani\thanks{
Istituto per le Applicazioni del Calcolo ``M.\ Picone'', 
Consiglio Nazionale delle Ricerche,
Via dei Taurini, 19 -- Rome, Italy, 
m.briani@iac.cnr.it}
\and{Emiliano Cristiani\thanks{
Istituto per le Applicazioni del Calcolo ``M.\ Picone'', 
Consiglio Nazionale delle Ricerche,
Via dei Taurini, 19 -- Rome, Italy, 
e.cristiani@iac.cnr.it}}\and{\!\!\!
Elisa Iacomini}\thanks{
Dipartimento di Scienze di Base e Applicate per l'Ingegneria, 
Sapienza -- Universit\`a di Roma, 
Via Scarpa, 14/16 -- Rome, Italy, elisa.iacomini@sbai.uniroma1.it}}

          %Use \thanks statements for acknowedgements of grants and
          %support. They will appear below all the authors' addresses, so be
          %specific about which author is thanking whom:

          %\thanks{}

          %\date{Received date / Revised version date}
          % The correct dates will be entered by the editor

\pagestyle{myheadings} 

\markboth{Sensitivity analysis of the LWR model for traffic forecast on large networks}{M. Briani, E. Cristiani, and E. Iacomini} 

\maketitle

\begin{abstract}
In this paper we investigate the sensitivity of the LWR model on network to its parameters and to the network itself. The quantification of sensitivity is obtained by measuring the Wasserstein distance between two LWR solutions corresponding to different inputs. To this end, we propose a numerical method to approximate the Wasserstein distance between two density distributions defined on a network. 
We found a large sensitivity to the traffic distribution at junctions, the network size, and the network topology.
\end{abstract}
          
\begin{keywords}
LWR model; networks; traffic; uncertainty quantification; Wasserstein distance; earth mover's distance; Godunov scheme; multi-path model; linear programming.
\end{keywords}

\begin{AMS}
35R02, 35L50, 90B20, 90C05.
\end{AMS}

%\linenumbers

\section{Introduction.}\label{sec:intro}	
In this paper we deal with the sensitivity analysis of the celebrated LWR model for traffic forecast on networks. The LWR model was introduced by Lighthill and Whitham \cite{LW} and Richards \cite{R}, and it paved the way to the macroscopic description of traffic flow \cite{bellomo2011SR,habermanbook,helbing2001RMP}. 
In a macroscopic model, traffic is described in terms of average density $\rho=\rho(x, t)$ and velocity $v=v(x,t)$ of vehicles, rather tracking each single vehicle. 
The natural assumption that the total mass is conserved along the road leads to impose that $\rho$ and $v$ obey
\begin{equation}\label{LWR}
\partial_t\rho+\partial_x(\rho v)=0,\qquad \rho(x,0)=\rho^0(x)
\end{equation}
for $(x,t)\in\R\times[0,T]$, a final time $T>0$, and initial distribution $\rho^0$. In first-order models like LWR, the velocity $v=v(\rho)$ is given as  function of the density $\rho$, thus closing the equation. \REV{Without loss of generality,} in the rest of the paper we will assume that the maximal density of vehicles is normalized to 1 and the fundamental diagram $f(\rho):=\rho\ v(\rho)$ has the form
\begin{equation}\label{FD}
f(\rho):=\left\{
\begin{array}{ll}
\frac{\fmax}{\sigma}\rho,  & \text{ if } \rho\leq\sigma \\ [2mm]
\frac{\fmax}{\sigma-1}(\rho-1), & \text{ if } \rho>\sigma \\
\end{array}
\right.
\end{equation}
for some $\sigma\in(0,1)$ and $\fmax>0$, see Fig.\ \ref{fig:FD}. 
\begin{figure}[h!]
\centerline{
\begin{overpic}[width=0.49\textwidth]{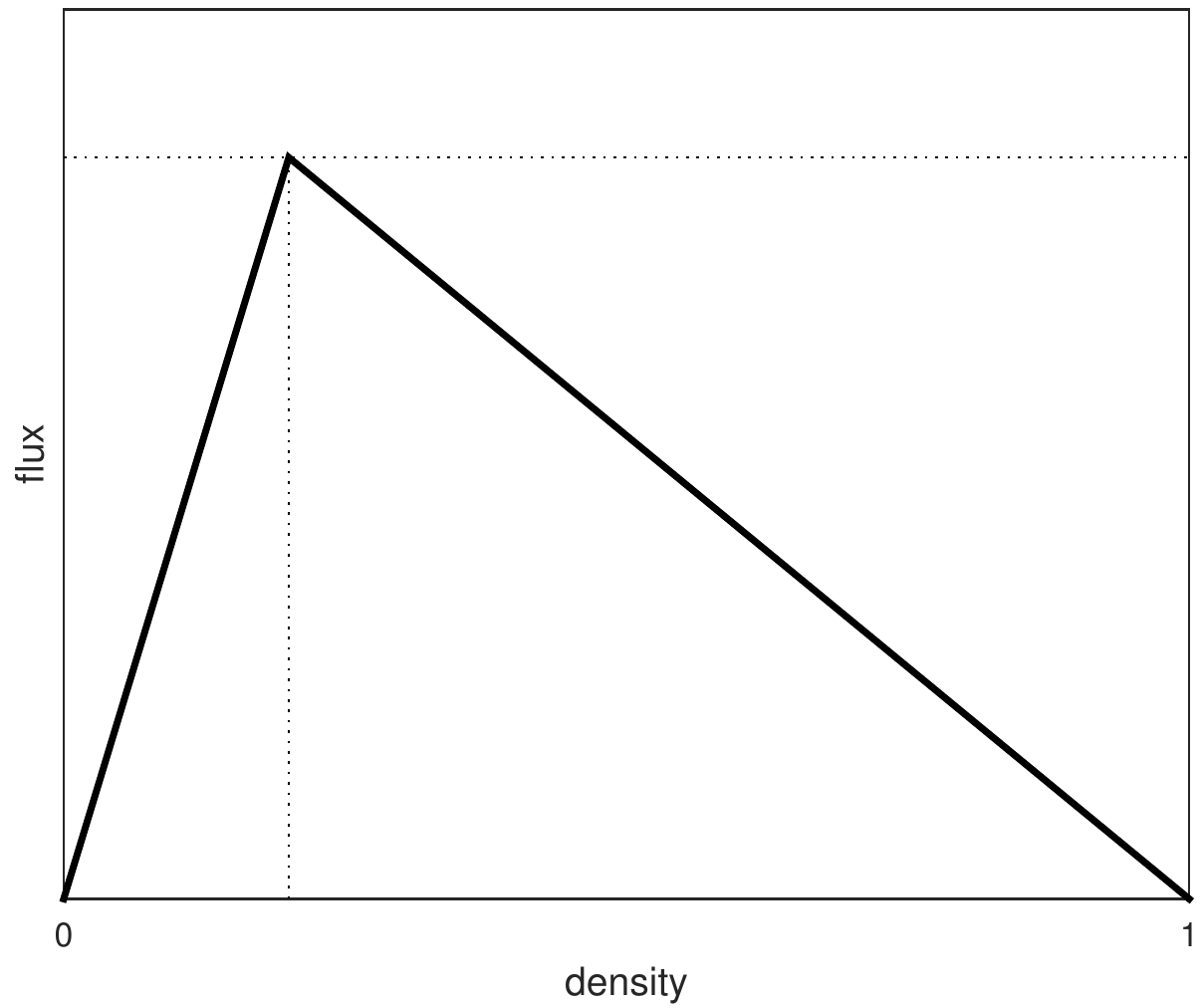}
\put(23,4){$\sigma$}\put(40,73){$\fmax$}\end{overpic}} 
\caption{Fundamental diagram $f(\rho)=\rho\ v(\rho)$.}
\label{fig:FD}
\end{figure}

In order to describe real situations where the vehicles move on several interconnected roads, the simple model \eqref{LWR} is not sufficient. This has motivated several authors to consider analogous equations on a network (metric graph), which is a directed graph whose edges are equipped with a system of coordinates. In this context vertexes are called \emph{junctions} and edges are called \emph{roads}.
The natural way to extend \eqref{LWR} to a network is to assume that the conservation law is separately satisfied on each road for all times $t>0$. Moreover, additional conditions have to be imposed at junctions, because in general the conservation of the mass alone is not sufficient to characterize a unique solution. We refer the reader to the book by Garavello and Piccoli \cite{piccolibook} for more details about the ill-posedness of the problem at junctions. Multiple workarounds for such ill-posedness have been suggested in the literature: 
(i) maximization of the flux across junctions and introduction of priorities among the incoming roads \cite{coclite2005SIMA,piccolibook,holden1995SIMA}; 
(ii) introduction of a \emph{buffer} to model the junctions by means of additional ODE coupled with \eqref{LWR} \cite{bressan2015NHM,garavello2012DCDS-A,garavello2013bookchapt,herty2009NHM}; 
(iii) reformulation of the problem on all possible paths on the network rather than on roads and junctions \cite{bretti2014DCDS-S,briani2014NHM,hilliges1995TRB}.
In general, they all allow to determine a unique solution for the traffic evolution on the network, but the solutions might be different.

In the following we will employ the approach (iii) together with a Godunov-based numerical approximation \cite{bretti2014DCDS-S,briani2014NHM}, also used to reproduce Wardrop equilibria \cite{cristiani2015NHM}. 
More recently, it was shown that the solution computed by the path-based model coincides with the many-particle limit of the first-order microscopic follow-the-leader model \cite{cristiani2016NHM}.

\medskip

In this paper we aim at quantifying the sensitivity of the LWR model to its inputs. In particular we will focus on the following inputs:
\begin{itemize}
\item initial datum;
\item fundamental diagram;
\item traffic distribution at junctions;
\item network size;
\item network connectivity.
\end{itemize}

The sensitivity will be measured by computing the distance between two LWR solutions obtained with different inputs or parameters, in order to understand their impact on the final solution. The question arises which distance is more suitable to this kind of investigation. It is by now well understood that $L^p$ distances do not catch the natural concept of distance among densities, see, e.g., the discussion in \cite[Sec.\ 7.1]{cristianibook} and Sec.\ \ref{sec:wassersteindistance} in this paper.
A notion of distance which instead appears more natural is that of \emph{Wasserstein distance}, see, e.g., \cite{bolley2005JHDE,cristianibook,difrancesco2015ARMA,fjordholm2016SINUM,nessyahu1992SINUM}. 
The bottleneck for using this distance on networks comes from the computational side. In fact, the classical definition of Wasserstein distance is not suitable for numerical approximation. 
Recent characterizations also seem to be unfit for this goal. Let us mention in this regard the variational approach proposed by Maz\'on et al.\ \cite{mazon2015SIOPT}, which generalizes to networks the results by Evans and Gangbo \cite{evansgangbo}. It can be also shown \cite{mazon2015SIOPT} that the Wasserstein distance has a nice link with the $p$-Laplacian operator. This leads naturally to a PDE-based approach to solve the problem. Unfortunately, preliminary investigations \cite{iacominitesi} have shown that both these approaches are highly ill-conditioned and then computationally infeasible.
Let us finally mention the semi-discrete approach proposed by Treleaven and Frazzoli \cite{treleaven2014ACC}, which is well-conditioned but in practice it is still too expensive from the computationally point of view, especially on large networks. It is therefore necessary to find an alternative method to compute the Wasserstein distance.

Following the lines of \cite[Chap.\ 19]{sinhabook}, we propose and implement a pure discrete, reasonably fast algorithm to approximate the Wasserstein distance on a network, based on a linear programming method.
To the authors' knowledge, the study presented here is the first one employing the Wasserstein distance to quantify the sensitivity of a macroscopic model for traffic flow on networks.

\subsubsection*{Paper organization}
In Sec.\ \ref{sec:LWR} we present in detail the mathematical model, which is a local version of the path-based approach \cite{bretti2014DCDS-S,briani2014NHM}, suitable to deal with large networks.
In Sec.\ \ref{sec:wassersteindistance} we motivate the choice of the Wasserstein distance for the subsequent sensitivity analysis and we describe the numerical procedure to compute it.
In Sec.\ \ref{sec:sensitivity}, which is the core of the paper, we present several tests designed to show the sensitivity of the model to various parameters and inputs.
Finally, in Sec.\ \ref{sec:conclusions} we sketch some conclusions.

%%%%%%%%%%%%%%%%%%%%%%%%%%%%%%%%%%%%%%%%%%
%%%%%%%%%%%%%%%%%%%%%%%%%%%%%%%%%%%%%%%%%%
%%%%%%%%%%%%%%%%%%%%%%%%%%%%%%%%%%%%%%%%%%

\section{A path-based model for large networks.}\label{sec:LWR}
In this section we introduce the network and the model we will use in the following to describe the traffic flow.

\subsection{Basic definitions: graph, network, and computational grid.}\label{sec:basicdefs}
Let us consider a connected and directed graph $\mathcal G=(\mathcal E,\mathcal V)$ consisting of a finite set of vertexes $\mathcal V$ and a set of oriented edges $\mathcal E$ connecting the vertexes.

Stemming from $\mathcal G$, we build the network $\mathcal N$ by assigning to each edge $\edge\in\mathcal E$ a positive length $L_\edge\in(0,+\infty)$. Moreover, a coordinate is assigned to each point of the edge. The coordinate will be denoted by $x_\edge$ and it increases according to the direction of the edge, i.e.\ we have $x_\edge=0$ at the initial vertex and $x_\edge=L_\edge$ at the terminal vertex.

For numerical purposes, the network has to be discretized by means of a grid. Let us denote by $\Dx$ the space step.  
To avoid technicalities, let us assume that the length of all the edges is a multiple of $\Dx$ so that we can easily use the same grid size everywhere in $\mathcal N$. In this way, each edge is being divided in $\Nce:=\frac{L_\edge}{\Dx}$ cells. The total number of cells in $\mathcal N$ is $\Nc:=\sum_{\edge\in\mathcal E}\Nce$. The center of each cell on edge $\edge$ will be denoted by $x_{\edge,j}$, $j=1,\ldots,\Nce$, and the cell itself by $C_{\edge,j}=\big[x_{\edge,j-\frac12},x_{\edge,j+\frac12}\big)$.

Finally, let us denote by $\Dt$ the time step. The number of time iterations will be denoted by $\NT:=\frac{T}{\Dt}$.

\subsection{The model.}\label{sec:themodel}
In \cite{bretti2014DCDS-S,briani2014NHM} a theoretical framework and a Godunov-based numerical algorithm was introduced to solve the LWR equations on small networks. The problem is reformulated on (possibly overlapping) paths joining all possible sources with all possible destinations, see Fig.\ \ref{fig:paths}. 
\begin{figure}[h!]
\centerline{
\begin{overpic}[width=0.49\textwidth]{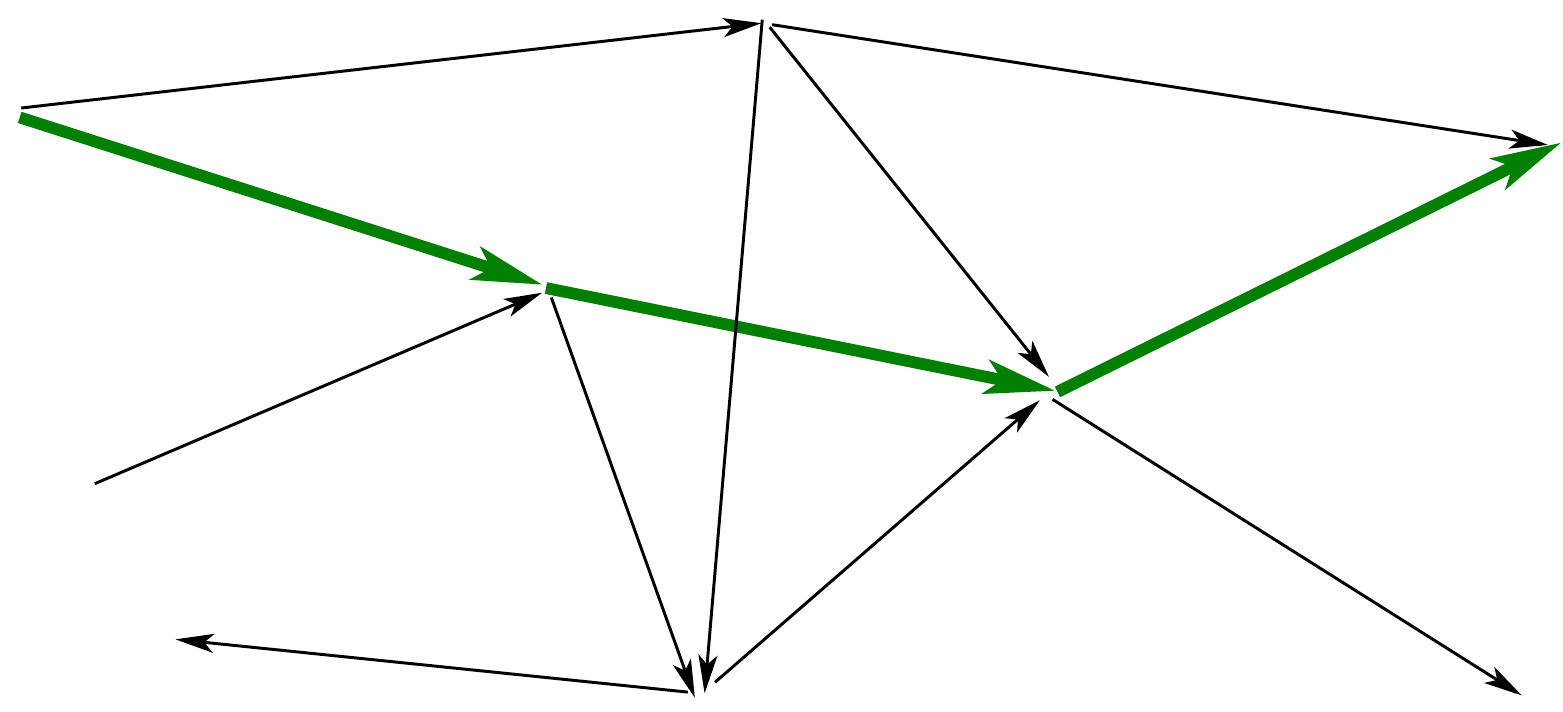}\end{overpic}
\begin{overpic}[width=0.49\textwidth]{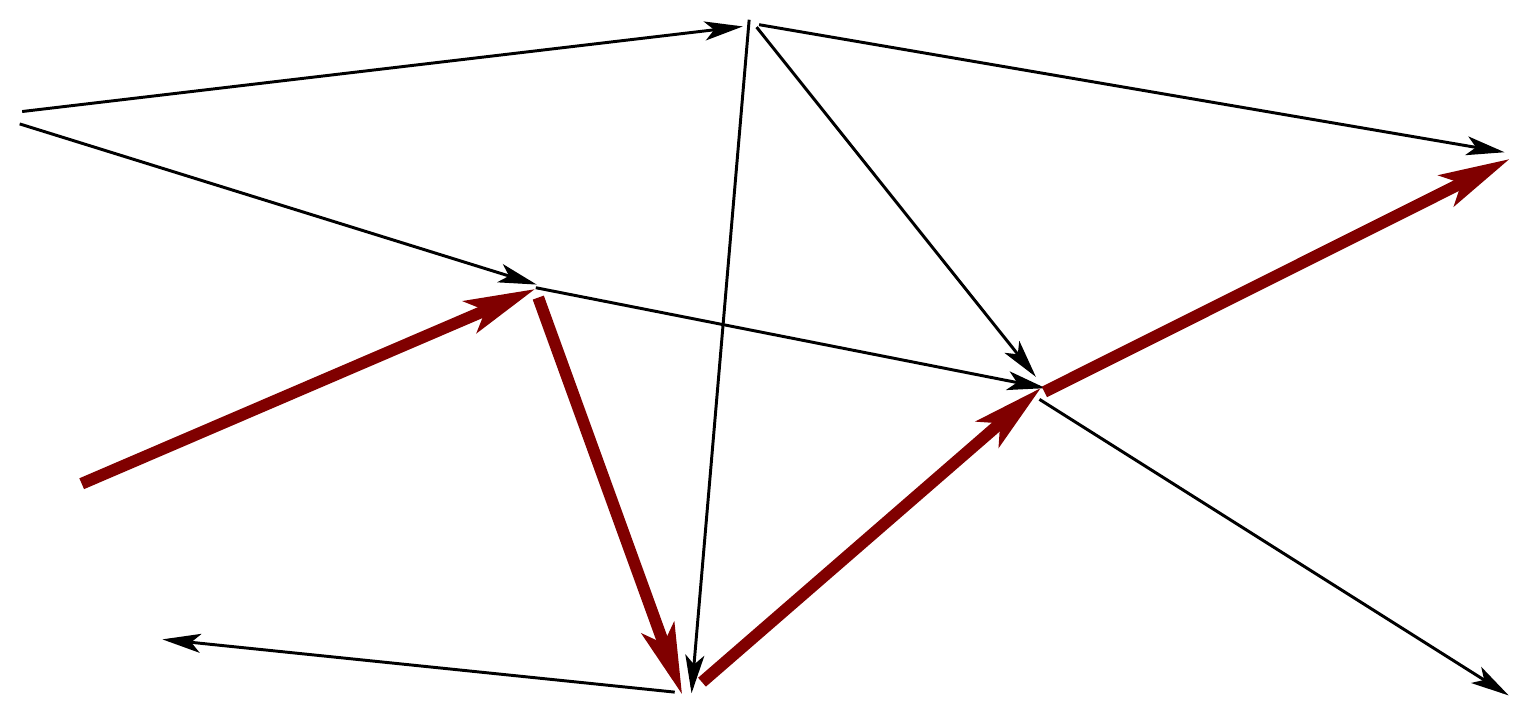}\end{overpic}} 
\caption{A generic network. Two possible paths are highlighted. Note that the two paths share one arc of the network.}
\label{fig:paths}
\end{figure}
Let us denote by $\Np$ the total number of paths on $\mathcal N$. Rather than tracking the total density $\rho$, one defines sub-densities $\mu^p$, $p=1\ldots,\Np$, each of which represents the amount of vehicles following the $p$-path. 
Clearly the dynamics of vehicles following the $p$-path are influenced by vehicles following other paths whenever the two groups of drivers share the same portion of the network. 
From the mathematical point of view, the problem is formulated as a system of $\Np$ conservation laws with discontinuous flux and no special treatment of the junction is needed (each path is seen as an uninterrupted road) \cite{bretti2014DCDS-S,briani2014NHM}. The main drawback of this approach is that the number of paths grows exponentially with the size of the network and the system becomes rapidly unmanageable.
In \cite{bretti2014DCDS-S,briani2014NHM} it was also suggested a hybrid approach which avoids to store all paths, but it was not detailed.
In the following we fill the gap giving the precise algorithm and testing it on real-size networks.

Define as usual the unknown density at grid nodes as
\begin{equation}\label{def_sol_num}
\rho_{\edge,j}^n:=\frac{1}{\Dx}\int_{x_{\edge,j-\frac12}}^{x_{\edge,j+\frac12}} \rho(y,n\Dt)dy,\qquad \edge\in\mathcal E,\quad j=1,\ldots,\Nce, \quad n=1,\ldots,\NT.
\end{equation}
Starting from the initial condition 
$\rho_{\edge,j}^0:=\frac{1}{\Dx}\int_{x_{\edge,j-\frac12}}^{x_{\edge,j+\frac12}} \rho^0(y)dy$, 
the approximate solution at any internal cell $j=2,\ldots,\Nce$--$1$ of any edge $\edge\in\mathcal E$ is easily found by the standard Godunov scheme
\begin{equation}\label{GODcelleinterne}
\rho_{\edge,j}^{n+1}=\rho_{\edge,j}^n-\frac{\Dt}{\Dx}\Big(G(\rho_{\edge,j}^n,\rho_{\edge,j+1}^n)-G(\rho_{\edge,j-1}^n,\rho_{\edge,j}^n)\Big),\quad n=0,\ldots,\NT-1,
\end{equation}
where the numerical flux $G$ is defined as
\begin{equation}\label{GodunovFlux}
G(\rho_-,\rho_+):=
\left\{
\begin{array}{ll}
\min\{f(\rho_-),f(\rho_+)\} & \textrm{if } \rho_-\leq \rho_+ \\
f(\rho_-) & \textrm{if } \rho_->\rho_+ \textrm{ and } \rho_-<\sigma \\
f(\sigma) & \textrm{if } \rho_->\rho_+ \textrm{ and } \rho_- \geq \sigma \geq \rho_+ \\
f(\rho_+) & \textrm{if } \rho_->\rho_+ \textrm{ and } \rho_+>\sigma
\end{array}
\right. .
\end{equation}

Around vertexes we proceed as follows: let us focus on a generic vertex $\vertex\in\mathcal V$ and denote by $\Nvinc$ and $\Nvout$ the number of incoming and outgoing edges at $\vertex$, respectively. 
As in the classical approaches \cite{piccolibook}, we assume that it is given a \textit{traffic distribution matrix} $A^\vertex=(\alpha^\vertex_{\textsc{r}\textsc{r}^\prime})$, $\textsc{r}=1,\ldots,\Nvinc,\ \textsc{r}^\prime=1,\ldots,\Nvout$ which prescribes how the traffic distributes in percentage from any incoming edge $\textsc{r}$ to any outgoing edge $\textsc{r}^\prime$. 
Clearly $0\leq \alpha^\vertex_{\textsc{r}\textsc{r}^\prime}\leq 1\ \forall \textsc{r},\textsc{r}^\prime$ and $\sum_{\textsc{r}^\prime}\alpha^\vertex_{\textsc{r}\textsc{r}^\prime}=1\ \forall \textsc{r}$.

Now, following the path-based approach \cite{bretti2014DCDS-S,briani2014NHM}, we look at all possible paths across the vertex $\vertex$. Having $\Nvinc$ incoming edges and $\Nvout$ outgoing edges, we have $\Nvp:=\Nvinc\times\Nvout$ possible paths.
We denote by $\mu_{\edge,j}^n(p,\vertex)$ the density of the vehicles in the cell $j$ of edge $\edge$ at time $t^n$ moving along path $p$ across vertex $\vertex$. 
%We keep denoting by $\rho$ the total density,\footnote{instead of $\omega$ as in Refs.\ \refcite{bretti2014DCDS-S,briani2014NHM}.} i.e.,
%$$
%\rho(\edge,j,n;\vertex)=\sum_{p}\mu(\edge,j,n;p,\vertex).
%$$

Let us now focus on a generic path $p=(\edge,\edge^\prime)$ which joins edge $\einc$ with $\eout$, see Fig.\ \ref{fig:locpath}. 
\begin{figure}[h!]
\centerline{
\begin{overpic}[width=0.8\textwidth]{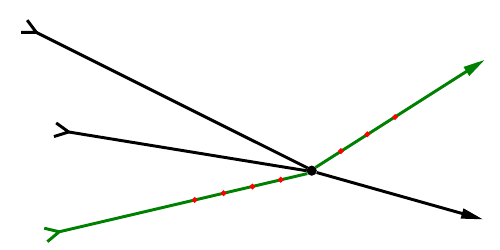}
\put(61.5,18){$\vertex$}
\put(64,19){\footnotesize \textcolor{red}{$1$}}
\put(69,22){\footnotesize \textcolor{red}{$2$}}
\put(58,12){\footnotesize \textcolor{red}{$\Nc_\edge$}}
\put(50.8,10){\footnotesize \textcolor{red}{$\Nc_\edge$--$1$}}
\put(30,10){$\edge$}
\put(80,30){$\edge^\prime$}
\put(94,32){$p=(\edge,\edge^\prime)$}
\end{overpic}
} 
\caption{Zoom around vertex $\vertex$. We show the path $p=(\edge,\edge^\prime)$ and cells' labels on that path.}
\label{fig:locpath}
\end{figure}
The problem is ready to be solved by the Godunov-based multi-path scheme \cite{bretti2014DCDS-S,briani2014NHM} with minor modifications for all paths of all vertexes. Dropping the indexes $p$ and $\vertex$ for readability, we have in the last cell of the incoming edge:
\begin{equation}\label{MPloc_schema_before}
\mu_{\edge,\Nce}^{n+1}=\mu_{\edge,\Nce}^n-
\frac{\Dt}{\Dx}\Bigg( \frac{\mu_{\edge,\Nce}^n}{\rho_{\edge,\Nce}^n} 
G\big(\rho_{\edge,\Nce}^n,\rho_{\edge^\prime,1}^n\big)- 
\alpha^\vertex_{\edge\edge^\prime} 
G\big(\rho_{\edge,\Nce-1}^n,\rho_{\edge,\Nce}^n\big)\Bigg),
\end{equation}
for $n=0,\ldots,\NT-1$. Note the presence of the parameter $\alpha^\vertex_{\edge\edge^\prime}$ in front of the incoming flux which tells that only a percentage of the total mass is following path $p$.

In the first cell of the outgoing edge we have instead:
\begin{equation}\label{MPloc_schema_after}
\mu_{\edge^\prime,1}^{n+1}=\mu_{\edge^\prime,1}^n-
\frac{\Dt}{\Dx}\Bigg( \frac{\mu_{\edge^\prime,1}^n}{\rho_{\edge^\prime,1}^n} 
G\big(\rho_{\edge^\prime,1}^n,\rho_{\edge^\prime,2}^n\big)- 
\frac{\mu_{\edge,\Nce}^n}{\rho_{\edge,\Nce}^n} 
G\big(\rho_{\edge,\Nce}^n,\rho_{\edge^\prime,1}^n\big)\Bigg),
\end{equation}
for $n=0,\ldots,\NT-1$. 

The algorithm is completed by summing, at any time step, the sub-densities $\mu$'s (where defined) to compute the total density $\rho$, to be used at the next time step in \eqref{GODcelleinterne}, \eqref{MPloc_schema_before}, and \eqref{MPloc_schema_after}. More precisely, we have
\begin{equation}\label{couplingmultipath}
\rho_{\edge,\Nce}^{n+1}=\sum_{q=1}^{\Nvp}\mu_{\edge,\Nce}^{n+1}(q,\vertex)\quad\text{ and }\quad
\rho_{\edge^\prime,1}^{n+1}=\sum_{q=1}^{\Nvp}\mu_{\edge^\prime,1}^{n+1}(q,\vertex).
\end{equation}
Note that \eqref{MPloc_schema_before} and \eqref{MPloc_schema_after} are \emph{systems} of $\Nvp$ equations, coupled via \eqref{couplingmultipath}, that takes into account vehicles moving along paths other than $p=(\edge,\edge')$. 
The other total densities $\rho$ appearing in \eqref{MPloc_schema_before} and \eqref{MPloc_schema_after} are instead given by \eqref{GODcelleinterne}. \REV{For the sake of clarity and dissipate any doubt, in the appendix we write explicitly the scheme in the case of a junction with two incoming and two outgoing roads.}

\subsection*{Initial condition} The scheme \eqref{GODcelleinterne}-\eqref{MPloc_schema_before}-\eqref{MPloc_schema_after} is initialized at the first time step with the total density $\rho^0$ in the cells far from the junctions, and with the sub-densities $\mu^0$'s in the cells adjacent to the junctions. If initial sub-densities are not available, we can recover them by means of the total density and the distribution matrix. 
For example, at the last cell of the incoming edge we can set
\begin{equation}\label{MPloc_DI_1}
\mu_{\edge,\Nce}^0(p,\vertex):=\alpha^\vertex_{\edge\edge^\prime}\ \rho_{\edge,\Nce}^0.
\end{equation}
Instead, at the first cell of the outgoing edge we can split the total density in equal parts, simply assuming that vehicles come equally from all incoming edges,
\begin{equation}\label{MPloc_DI_2}
\mu_{\edge^\prime,1}^0(p,\vertex):=\frac{\rho_{\edge^\prime,1}^0}{\Nvinc}.
\end{equation}

Note that the initial condition for the sub-densities $\mu$'s is needed just for initiating the numerical procedure. After that, it becomes rapidly noninfluential since scheme \eqref{GODcelleinterne} for internal cells is coupled with systems \eqref{MPloc_schema_before}-\eqref{MPloc_schema_after} by means of total density $\rho$ only (Eq.\ \ref{couplingmultipath}). Therefore, the initial subdivision is lost.

%%%%%%%%%%%%%%%%%%%%%%%%%%%%%%%%%%%%%%%%%%
%%%%%%%%%%%%%%%%%%%%%%%%%%%%%%%%%%%%%%%%%%
%%%%%%%%%%%%%%%%%%%%%%%%%%%%%%%%%%%%%%%%%%

\section{Computation of the Wasserstein distance.}\label{sec:wassersteindistance}
It is well known that several problems in traffic modeling require the comparison of two density functions representing traffic conditions. In the following we list some of them:
\begin{itemize}
\item theoretical study of properties of the solution of scalar conservation laws;
\item convergence of the numerical schemes (verifying that the numerical solution is close to the exact solution);
\item calibration (finding the values of the parameters for the predicted outputs to be as close as possible to the observed ones);
\item validation (checking if the outputs are close to the observed ones);
\item sensitivity analysis (quantifying how the uncertainty in the outputs can be apportioned to different sources of uncertainty in the inputs and/or model's parameters). This is the problem we consider in this paper.
\end{itemize}

In this section we introduce the Wasserstein distance\footnote{The Wasserstein distance was first introduced by Kantorovich in 1942 and then rediscovered many times. Nowadays, it is also known as $Lip^\prime$-norm, earth mover's distance, $\bar d$-metric, Mallows distance. An important characterization is also given by the Kantorovich--Rubinstein duality theorem.} as the main ingredient for the quantification of the distance between two outputs of the LWR model (e.g., density, velocity, flux). 

\subsection{Motivations and definitions.}\label{sec:motivations}
In many papers the quantification of the closeness of two outputs is performed by means of the $L^1$, $L^2$ or $L^\infty$ distance (in space at final time or in both space and time). Although this can be satisfactory for nearly equal outputs or for convergence results, it appears inadequate for measuring the distance of largely different outputs. To see this, let us focus on the density of vehicles. In Fig.\ \ref{fig:L1vsW}
\begin{figure}[h!]
\centerline{
\begin{overpic}
[width=0.55\textwidth]{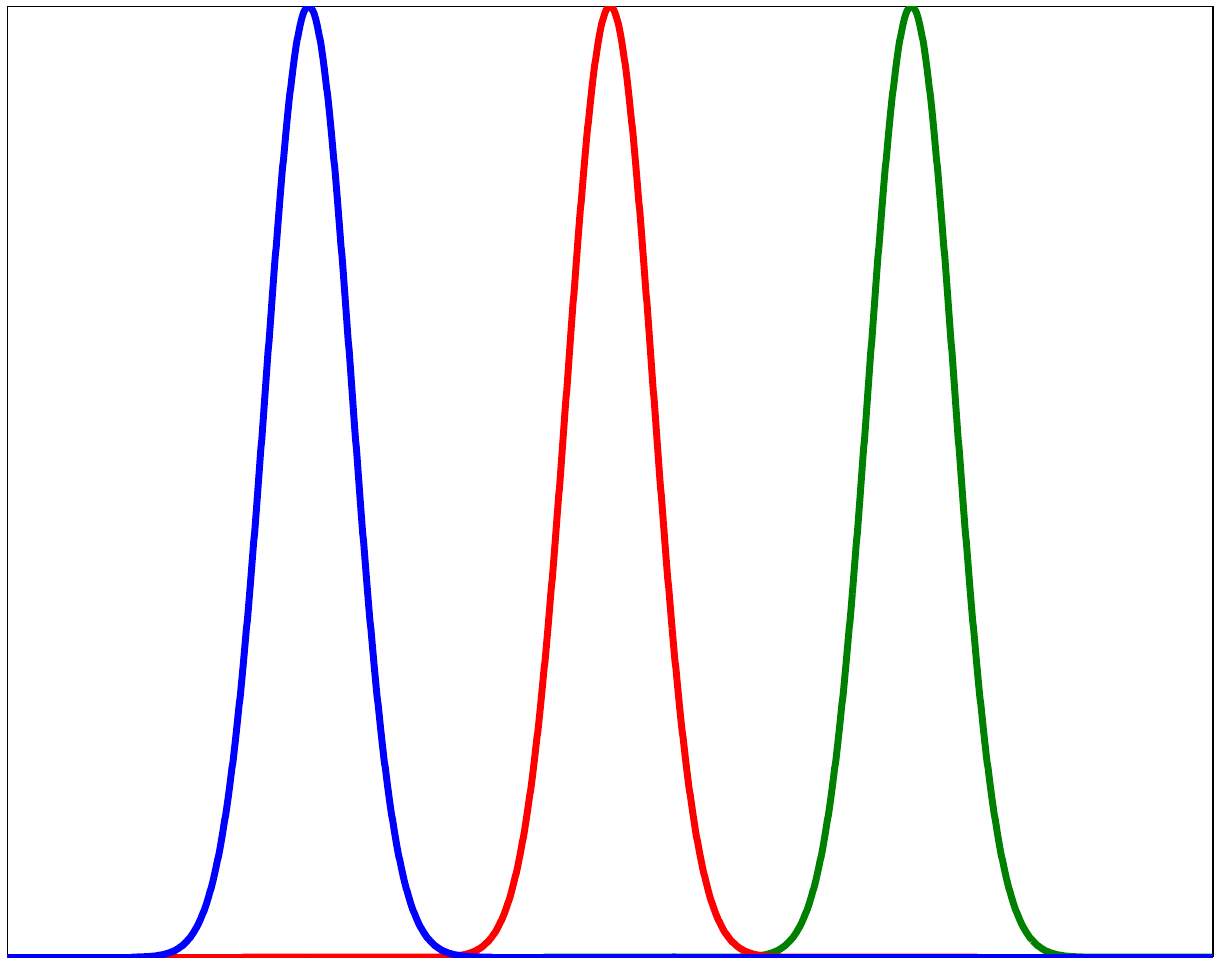}
\put(17,60){$\rho^1$}\put(41.5,60){$\rho^2$}\put(66,60){$\rho^3$}
\end{overpic}} 
\caption{Three density functions with disjoint supports.}
\label{fig:L1vsW}
\end{figure}
three density functions $\rho^i$, $i=1,2,3$, corresponding to the same total mass, say $M$, are plotted. It is plain that the $L^1$ distances between $\rho_1$ and $\rho_2$ and between $\rho_1$ and $\rho_3$ are both equal to $2M$. Similarly, all $L^p$ distances are blind with respect to variation of the densities once the supports of them are disjoint. Our perception of distance suggests instead that $\|\rho_3-\rho_1\|>\|\rho_2-\rho_1\|$, and this is exactly what Wasserstein distance guarantees, as we will see in the following.

\medskip

Let us denote by $(X,\mathfrak{D})$ a complete and separable metric space with distance $\mathfrak{D}$, and by $\mathcal B(X)$ a Borel $\sigma$-algebra of $(X,\mathfrak{D})$. Let us also denote by $\mathcal M^+(X)$ the set of non-negative finite Radon measures on $(X,\mathcal B(X))$. Let $\mis{\supply}$ (\supply \ standing for \emph{supply}) and $\mis{\demand}$ (\demand \ standing for \emph{demand}) be two Radon measures in $\mathcal M^+(X)$ such that $\mis{\supply}(X)=\mis{\demand}(X)$.
\begin{defn}[Wasserstein distance] For any $p\in[1,+\infty)$, the $L^p$-Wasserstein distance between $\mis{\supply}$ and $\mis{\demand}$ is
\begin{equation}\label{def:W}
W_p(\mis{\supply},\mis{\demand}):=\left(\inf_{\gamma\in\Gamma(\mis{\supply},\mis{\demand})}\int_{X\times X}\mathfrak{D}(x,y)^p\ d\gamma(x,y)\right)^{1/p}
\end{equation}
where
\begin{equation*}
\Gamma(\mis{\supply},\mis{\demand}):=\left\{\gamma\in\mathcal M^+(X\times X)\text{ s.t. }\gamma(A\times X)=\mis{\supply}(A),\ \gamma(X\times B)=\mis{\demand}(B),\ \forall \ A,B\subset X\right\}.
\end{equation*}
\end{defn}
Assuming that the measures $\mis{\{\supply,\demand\}}$ are absolutely continuous with respect to the Lebesgue measure, i.e.\ there are two density functions $\rho^{\{\supply,\demand\}}$ such that $d\mis{\{\supply,\demand\}}=\rho^{\{\supply,\demand\}} dx$, and considering the particular case $X=\mathbb R^n$, $\mathfrak D(x,y)=\|x-y\|_{\mathbb{R}^n}$, we have
\begin{equation}\label{def:Wbis}
W_p(\mis{\supply},\mis{\demand})=W_p(\rho^{\supply},\rho^{\demand})=\left(\inf_{T\in\mathcal T}\int_{\mathbb R^n}\|T(x)-x\|_{\mathbb{R}^n}^p\ \rho^\supply(x) dx\right)^{1/p}
\end{equation}
where
$$
\mathcal T:=\Bigg\{T:\mathbb R^n\to \mathbb R^n \text{ s.t. } \int\limits_B\rho^\demand(x)dx=\int\limits_{\{x:T(x)\in B\}}\rho^\supply(x)dx, \quad \forall B\subset \mathbb R^n \text{ bounded}\Bigg\}.
$$

Equation \eqref{def:Wbis} sheds light on the physical interpretation of the Wasserstein distance,  putting it in relation with the well known Monge--Kantorovich mass transfer problem \cite{villanibook}: a pile of, say, soil, with mass density distribution $\rho^{\supply}$, has to be moved to an excavation with mass density distribution $\rho^{\demand}$ and same total volume. Moving a unit quantity of mass has a cost which equals the distance between the source and the destination point. 
We consequently are looking for a way to rearrange the first mass onto the second which requires minimum cost.

\begin{rem}
In our framework, the mass to be moved corresponds to that of vehicles. We therefore measure the distance between two LWR solutions by computing the minimal cost to move vehicles from the scenario corresponding to one density distribution to the scenario corresponding to the other density distribution.
Moreover, we assume that the mass transfer {\em is constrained to happen along the network} $\mathcal N$ (i.e.\ $X=\mathcal N$), but the transfer {\em does not need to respect usual road laws} (road direction, traffic distribution at junctions, etc.). This is reasonable since the measure of the distance between densities is conceptually different from the physical motion of vehicles.
%Since we are considering macroscopic models, vehicles are indistinguishable and we are not able to measure the distance of the {\it same} vehicle in the two scenarios. Therefore, we are measuring the distance between two vehicle densities by computing the minimal cost needed to superimpose vehicles in one scenario onto vehicles in another scenario. 
\end{rem}

Note that if we consider two concentrated measures $\mis{\supply}=\delta_x$ and $\mis{\demand}=\delta_y$ in $\R$, the Wasserstein distance between the two is $W_p(\delta_x,\delta_y)=|x-y|$ $\forall p$, as one would expect. This is also the desired distance quantification in the case of only two vehicles on a road, located in $x$ and $y$ respectively.

\medskip

Unfortunately, definitions \eqref{def:W} and \eqref{def:Wbis}, though elegant, are not suitable for numerical approximation. Limiting our attention to the real line, the problem is easily solved by using nice alternative definitions of Wasserstein distance, like the ones reported in the following. 

If $p=1$, we have
\begin{equation}\label{W1Da}
W_1(\rho^\supply,\rho^\demand)=\int_\mathbb{R} |F^\supply(x)-F^\demand(x)|dx,\qquad F^{\{\supply,\demand\}}(x):=\int_{-\infty}^x \rho^{\{\supply,\demand\}}(x)dx.
\end{equation}

If $p=2$, we have
\begin{equation}\label{W1Db}
W_2(\rho^\supply,\rho^\demand)=\left(\int_{\mathbb{R}}|T^*(x)-x|^2\ \rho^\supply(x) dx\right)^{1/2}
\end{equation}
where $T^*$ is easily found as the function that satisfies
$$
\int_{y<T^*(x)} \rho^\demand(y)dy=\int_{y<x} \rho^\supply(y)dy,\qquad \forall x\in\mathbb R.
$$

The case of a network is more complicated. In the following section we propose an algorithm to approximate the Wasserstein distance on a network, reformulating the problem on a discrete graph.

\subsection{Discrete formulation on graph.}\label{sec:H}
A network can be always approximated by a discrete graph at the cost of a loss of resolution. 
Resorting to the discretization introduced in Sec.\ \ref{sec:basicdefs}, we can create a new, undirected graph whose vertexes coincide with the centers of the cells, and they are connected in agreement with the network, see Fig.\ \ref{fig:networktograph}. 
\begin{figure}[h!]
\centerline{
\begin{overpic}[width=0.32\textwidth]{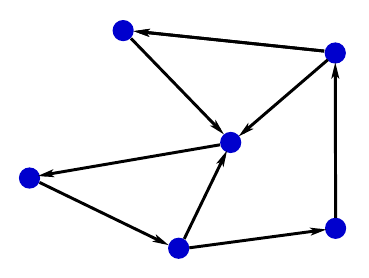}\end{overpic}
\begin{overpic}[width=0.32\textwidth]{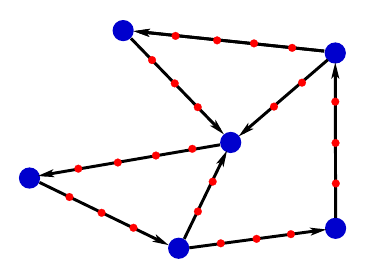}
\put(60,68){\footnotesize $C_j^\edge$}
\put(89,65){$\vertex$}
\put(50,59){$\edge$}
\put(32,35){\tiny $\Dx$}
\end{overpic}
\begin{overpic}[width=0.32\textwidth]{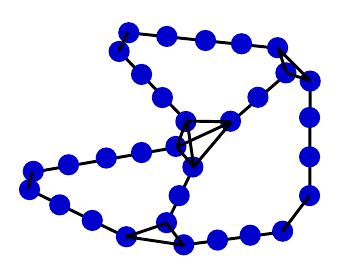}\end{overpic}} 
\caption{Original directed graph $\mathcal G$ (left), network $\mathcal N$ built on $\mathcal G$ discretized with space step $\Dx$ (center), and undirected graph $\GW$ built on the discretized network $\mathcal N$ (right).}
\label{fig:networktograph}
\end{figure}
The graph created by this procedure will be denoted hereafter by $\GW$. The number of vertices of $\GW$ equals the total number of cells in $\mathcal N$, therefore will be denoted by $\Nc$.
In order to complete the discretization procedure, \emph{the mass distributed on each cell is accumulated to the vertex located at the center of the cell}. 
Doing this, the problem is reformulated as an optimal mass transportation problem on a graph. 
The new problem clearly approximates the original one on the network $\mathcal N$ and the approximation error is controlled by $\Dx$. 
Focusing on our particular case, after the numerical approximation of the LWR model, we are left with a single value $\rho_{\edge,j}$ for each cell $C_{\edge,j}$ of the network, which represents the average density in that cell, see \eqref{def_sol_num}. This means that the numerical procedure returns a constant density $\rho(x)\equiv\rho_{\edge,j}$ for all $x\in C_{\edge,j}$, which must be accumulated in the centre $x_{\edge,j}$ of the cell.
%In the following we will give a precise estimate. 

At this point one can resort to classical problems (see Hitchcock's paper \cite{hitchcock1941JMP}) and methods (see e.g., \cite[Sec.\ 6.4.1]{santabrogiobook} and \cite[Chap.\ 19]{sinhabook}), recasting the problem in the framework of linear programming (LP).

\medskip

Let us enumerate the vertexes of $\GW$ by $j=1,\ldots,\Nc$, and denote by $\rho^\supply_j$, $\rho^\demand_j$, the supply and demand densities concentrated in vertex $j$, respectively.
Following the mass transport interpretation, the supply mass at vertex $j$ is $\supply_j:=\rho^\supply_j\Dx$, and the demand mass is $\demand_j:=\rho^\demand_j\Dx$. 
Let $c_{jk}$ be the cost of shipping a unit quantity of mass from the origin $j\in\{1,\ldots,\Nc\}$ to the destination $k\in\{1,\ldots,\Nc\}$. Here we define $c_{jk}$ as the length of the shortest path joining $j$ and $k$ on $\GW$, which can be easily found by, e.g., the Dijkstra algorithm \cite{dijkstra1950NM}. 
Let $x_{jk}$ be the (unknown) quantity shipped from the origin $j$ to the destination $k$. The problem is then formulated as
\begin{equation}\label{hitchcock}
\begin{tabular}{ll}
minimize & $\mathcal H:=\sum\limits_{j=1}^\Nc \sum\limits_{k=1}^\Nc c_{jk} x_{jk}$ \\ [3mm]
subject to & $\sum\limits_k x_{jk}=\supply_j,\quad \forall j$ \\  [2mm]
 & $\sum\limits_j x_{jk}=\demand_k,\quad \forall k$ \\  [2mm]
 & $x_{jk}\geq 0$.
\end{tabular}
\end{equation}
Note that the solution satisfies $x_{jk}\leq\min\{\supply_j,\demand_k\}$ since one cannot move more than $\supply_j$ from any source vertex $j$ and it is useless to bring more than $\demand_k$ to any sink vertex $k$.
From \eqref{hitchcock} it is easy to recover a standard LP problem
\begin{equation}\label{PL}
\begin{tabular}{ll}
minimize & $\mathbf{c}^\intercal \mathbf{x}$ \\ [3mm]
subject to & $\mathbf{A}\mathbf{x}=\mathbf{b}$ \\  [2mm]
 & $\mathbf{x}\geq 0$,
\end{tabular}
\end{equation}
simply defining
\begin{eqnarray*}
& &\mathbf{x}:=(x_{11},x_{12},\ldots,x_{1\Nc},x_{21},x_{22},\ldots,x_{2\Nc},\ldots,x_{\Nc 1},\ldots,x_{\Nc\Nc})^\intercal \\
& &\mathbf{c}:=(c_{11},c_{12},\ldots,c_{1\Nc},c_{21},c_{22},\ldots,c_{2\Nc},\ldots,c_{\Nc 1},\ldots,c_{\Nc\Nc})^\intercal \\
& &\mathbf{b}:=(\supply_1,\ldots,\supply_{\Nc},\demand_1,\ldots,\demand_{\Nc})^\intercal 
\end{eqnarray*}
and $\mathbf{A}$ as the $2\Nc\times \Nc^2$ sparse matrix
\begin{equation}
\mathbf{A}:=
\left[
\begin{array}{ccccc}
\mathbbm{1}_\Nc & 0 & 0 & \cdots & 0 \\
0 & \mathbbm{1}_\Nc & 0 & \cdots & 0 \\
0 & 0 & \mathbbm{1}_\Nc & \cdots & 0 \\
\vdots & \vdots & \vdots & \ddots & \vdots \\
0 & 0 & 0 & \cdots & \mathbbm{1}_\Nc \\
I_\Nc & I_\Nc & I_\Nc & I_\Nc & I_\Nc \\
\end{array}
\right]
\end{equation}
where $I_\Nc$ is the $\Nc\times \Nc$ identity matrix and $\mathbbm{1}_\Nc:=(\underbrace{1\ 1\ \cdots \ 1}_{\Nc \text{ times}})$.

\subsection{Error analysis.}\label{sec:erroranalysis}
Before focusing on the sensitivity analysis of the LWR model, it is useful to quantify the error introduced by the LP-based method presented above in computing the exact Wasserstein distance. %and then verify that this method is sufficiently accurate to our purposes.
We do that in the general case, without restricting ourselves to piecewise constant density functions.
\begin{prop}\label{prop:W-H}
Let $\rho^\supply,\rho^\demand:\mathcal N\to\R$ two densities defined on a network $\mathcal N$ such that 
$$M=\int_{\mathcal N}\rho^\supply dx=\int_{\mathcal N}\rho^\demand dx.$$ 
Then,
\begin{equation}\label{prop:W-H-tesi}
|W(\rho^\supply,\rho^\demand)-\mathcal H(\rho^\supply,\rho^\demand)|\leq M\Dx
\end{equation} 
where hereafter $W$ denotes the Wasserstein distance $W_1$ and $\mathcal H$ is the solution of the problem \ref{hitchcock}.
\end{prop}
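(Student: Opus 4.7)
The plan is to sandwich $\mathcal H$ and $W$ through an intermediate atomic measure and to conclude by the triangle inequality for $W_1$. Concretely, enumerate the cells of $\mathcal N$ by $j=1,\ldots,\Nc$ as in Sec.\ \ref{sec:H}, denote by $x_j$ the centre of the $j$-th cell, and introduce the atomic measures on $\mathcal N$
\[
\nu_\Dx^\supply:=\sum_{j=1}^{\Nc}\supply_j\,\delta_{x_j},\qquad
\nu_\Dx^\demand:=\sum_{j=1}^{\Nc}\demand_j\,\delta_{x_j},
\]
where $\supply_j,\demand_j$ are, as in \eqref{hitchcock}, the total masses of $\rho^\supply,\rho^\demand$ contained in the $j$-th cell. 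Since by construction of $\GW$ the length of the shortest path between any two of its vertices coincides with the geodesic distance on $\mathcal N$ between the corresponding cell centres (across a junction the two adjacent half-cells add up exactly to $\Dx$, matching the edge length of $\GW$), the linear program \eqref{hitchcock} is nothing but the Kantorovich discrete formulation of $W_1(\nu_\Dx^\supply,\nu_\Dx^\demand)$; in particular $\mathcal H(\rho^\supply,\rho^\demand)=W(\nu_\Dx^\supply,\nu_\Dx^\demand)$.

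Next I would bound the two ``projection'' errors by exhibiting an explicit admissible transport plan: send every infinitesimal bit of mass at $y\in C_{\edge,j}$ to the centre $x_{\edge,j}$ of the same cell. Each such move has length at most $\Dx/2$ on $\mathcal N$, whence
\[
W(\rho^\supply,\nu_\Dx^\supply)\leq\sum_{\edge,j}\int_{C_{\edge,j}}|y-x_{\edge,j}|\,\rho^\supply(y)\,dy\leq\frac{\Dx}{2}\int_{\mathcal N}\rho^\supply(y)\,dy=\frac{M\Dx}{2},
\]
and the identical argument yields $W(\rho^\demand,\nu_\Dx^\demand)\leq M\Dx/2$.

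Combining these estimates with the triangle inequality for the $W_1$ distance, and recalling $\mathcal H=W(\nu_\Dx^\supply,\nu_\Dx^\demand)$, I obtain
\[
|W(\rho^\supply,\rho^\demand)-\mathcal H(\rho^\supply,\rho^\demand)|
\leq W(\rho^\supply,\nu_\Dx^\supply)+W(\rho^\demand,\nu_\Dx^\demand)
\leq M\Dx,
\]
which is precisely \eqref{prop:W-H-tesi}. The only subtle step I anticipate is the identification of the graph distance on $\GW$ with the geodesic distance on $\mathcal N$ restricted to cell centres, which has to be verified separately within a single edge (obvious) and across junctions (where the two adjacent half-cells of length $\Dx/2$ conspire to produce the right total); once that identification is in place, the rest is a routine triangle-inequality argument in which the within-cell transport cost is controlled by the cell diameter $\Dx/2$.
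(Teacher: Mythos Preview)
Your proof is correct and takes a genuinely different route from the paper's. The paper fixes an optimal transport plan and decomposes it cell-by-cell: it writes $m_{jk}$ for the mass sent from cell $C_j$ to cell $C_k$, attaches to it density profiles $\tau^\supply_{jk}$ (supported in $C_j$) and $\tau^\demand_{jk}$ (supported in $C_k$), and then bounds $W(\tau^\supply_{jk},\tau^\demand_{jk})$ above and below by $m_{jk}c_{jk}\pm m_{jk}\Dx$ via comparison with Dirac masses placed at the borders (extreme case) and then shifted to the centres of the two cells. Summing over all pairs $(j,k)$ yields \eqref{prop:W-H-tesi}.

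Your argument instead exploits the metric structure of $W_1$ directly: you recognise $\mathcal H$ as $W(\nu_\Dx^\supply,\nu_\Dx^\demand)$ for the atomic projections and apply the triangle inequality together with the two projection bounds $W(\rho^{\{\supply,\demand\}},\nu_\Dx^{\{\supply,\demand\}})\leq M\Dx/2$. This is shorter and more conceptual; it makes transparent why the constant $M\Dx$ arises (two half-cell diameters times the total mass) and bypasses the step, somewhat implicit in the paper, of relating $\sum_{j,k}m_{jk}c_{jk}$ to $\mathcal H(\rho^\supply,\rho^\demand)$ in both directions. The paper's approach, on the other hand, stays at the level of transport plans without invoking the triangle inequality for $W_1$, and the sharpness example $\rho^\supply=M\delta_{\bar b_j}$, $\rho^\demand=M\delta_{\underline b_{j+1}}$ emerges more naturally from it. Your identification of the graph distance on $\GW$ with the geodesic distance on $\mathcal N$ between cell centres is correct and is indeed the point that requires care; once established, the rest is immediate.
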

\begin{proof}
To begin with, let us focus on a generic cell $C_j$ of the network. In accordance with the optimal flow (found \textit{a posteriori} as the solution of the optimal mass problem), the mass in $C_j$ is transferred in one or more cells of the network. Let us denote by $m_{jk}$ the mass which is moved from cell $C_j$ to $C_k$ for some $k=1,\ldots,J$ (including $k=j$). 
Let us also denote by $\tau^\supply_{jk}(\cdot)$ the density profile (with supp$(\tau^\supply_{jk})\subseteq C_j$) associated to the leaving mass $m_{jk}$ in $C_j$ and by $\tau_{jk}^\demand(\cdot)$ the density profile (with supp$(\tau_{jk}^\demand)\subseteq C_k$) associated to the arriving mass in $C_k$, see Fig.\ \ref{fig:teo_W}. 
\begin{figure}[h!]
\centerline{
\begin{overpic}[width=0.8\textwidth]{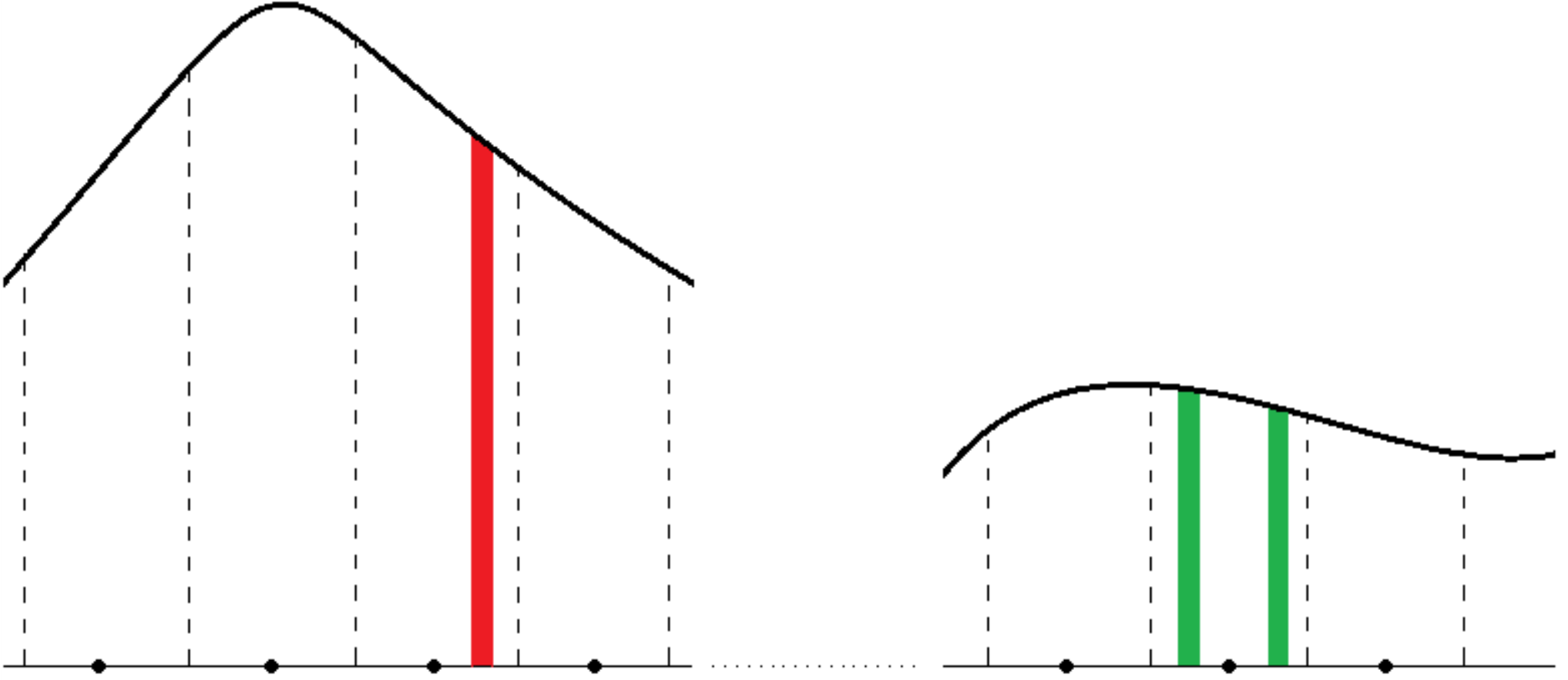}
\put(22,-3){$\underline b_j$}\put(32,-3){$\bar b_j$}\put(26.2,-5){$C_j$}
\put(73,-3){$\underline b_k$}\put(83,-3){$\bar b_k$}\put(77.5,-5){$C_k$}
\put(23,15){$m_{jk}$}\put(37,37){$\tau_{jk}$}\put(78,26){$\tau^*_{jk}$}
\put(79,25){\line(-1,-2){3}}\put(79,25){\line(1,-3){2.3}}
\put(31,34){\line(2,1){6}}
\end{overpic}} 
\vspace*{12pt}
\caption{Proposition \ref{prop:W-H}. Mass $m_{jk}$ moving from $C_j=[\underline b_j,\bar b_j)$ to $C_k=[\underline b_k,\bar b_k)$.}
\label{fig:teo_W}
\end{figure}
By definition we have
$$
m_{jk}=\int_{C_j}\tau^\supply_{jk}(x)dx=\int_{C_k}\tau_{jk}^\demand(x)dx.
$$
Let us denote by $\underline b_j:=x_{j-\frac12}$, $\bar b_j:=x_{j+\frac12}$,  and similarly by $\underline b _k$, $\bar b_k$, the two border points of the cell $j$ and $k$, respectively.

By suitably accumulating the masses at the \emph{borders} of the cells, and recalling that the discrete approach requires instead to accumulate the masses at the \emph{centers} of the cells, we have:
\\
\emph{Case A:} $j\neq k$. 
\begin{equation}\label{stima_jneqk_leq}
W(\tau^\supply_{jk},\tau_{jk}^\demand)\leq
m_{jk} 
\max_{\substack{b_j\in\{\bar b_j,\underline b _j\} \\ b_k\in\{\bar b_k,\underline b _k\}}}
W(\delta_{b_j},\delta_{b_k})=
\mathcal H(\tau^\supply_{jk},\tau_{jk}^\demand)+2\frac{m_{jk}\Dx}{2},
\end{equation}
and, equivalently,
\begin{equation}\label{stima_jneqk_geq}
W(\tau^\supply_{jk},\tau_{jk}^\demand)\geq
m_{jk} 
\min_{\substack{b_j\in\{\bar b_j,\underline b _j\} \\ b_k\in\{\bar b_k,\underline b _k\}}}
W(\delta_{b_j},\delta_{b_k})=
\mathcal H(\tau^\supply_{jk},\tau_{jk}^\demand)-2\frac{m_{jk}\Dx}{2}
\end{equation}
(where the additional distance $\pm 2\frac{m_{jk}\Dx}{2}$ comes from moving the mass from the borders to the centers of the cells in $C_j$ and $C_k$).
\\
\emph{Case B:} $j=k$. 
\begin{equation}\label{stima_j=k_leq}
\mathcal H(\tau^\supply_{jk},\tau_{jk}^\demand)=0
\qquad\text{and}\qquad
0\leq W(\tau^\supply_{jk},\tau_{jk}^\demand)\leq m_{jk}\Dx,
\end{equation}
then we still have
$$
W(\tau^\supply_{jk},\tau_{jk}^\demand)\leq
\mathcal H(\tau^\supply_{jk},\tau_{jk}^\demand)+m_{jk}\Dx
\qquad\text{and}\qquad
W(\tau^\supply_{jk},\tau_{jk}^\demand)\geq
\mathcal H(\tau^\supply_{jk},\tau_{jk}^\demand)-m_{jk}\Dx.
$$
as in \eqref{stima_jneqk_leq}-\eqref{stima_jneqk_geq}.

Summing up we obtain, by \eqref{hitchcock},
$$
W(\rho^\supply,\rho^\demand)=\sum_j\sum_k W(\tau^\supply_{jk},\tau_{jk}^\demand)\leq
\sum_j\sum_k [\mathcal H(\tau^\supply_{jk},\tau_{jk}^\demand)+m_{jk}\Dx]=
\mathcal H(\rho^\supply,\rho^\demand)+M\Dx
$$
and
$$
W(\rho^\supply,\rho^\demand)=\sum_j\sum_k W(\tau^\supply_{jk},\tau_{jk}^\demand)\geq
\sum_j\sum_k [\mathcal H(\tau^\supply_{jk},\tau_{jk}^\demand)-m_{jk}\Dx]=
\mathcal H(\rho^\supply,\rho^\demand)-M\Dx.
$$
Finally we have
$$
|W(\rho^\supply,\rho^\demand)-\mathcal H(\rho^\supply,\rho^\demand)|\leq M\Dx.
$$
It is also easy to prove that this estimate is actually sharp. To see this it is sufficient to consider the one-dimensional case $\mathcal N=\R$ and choose $\rho^{\supply}=M\delta_{\bar{b}_j}$ and $\rho^{\demand}=M\delta_{\underline{b}_{j+1}}$.
\end{proof}
\begin{figure}[b!]
\centerline{
\begin{overpic}[width=0.45\textwidth]{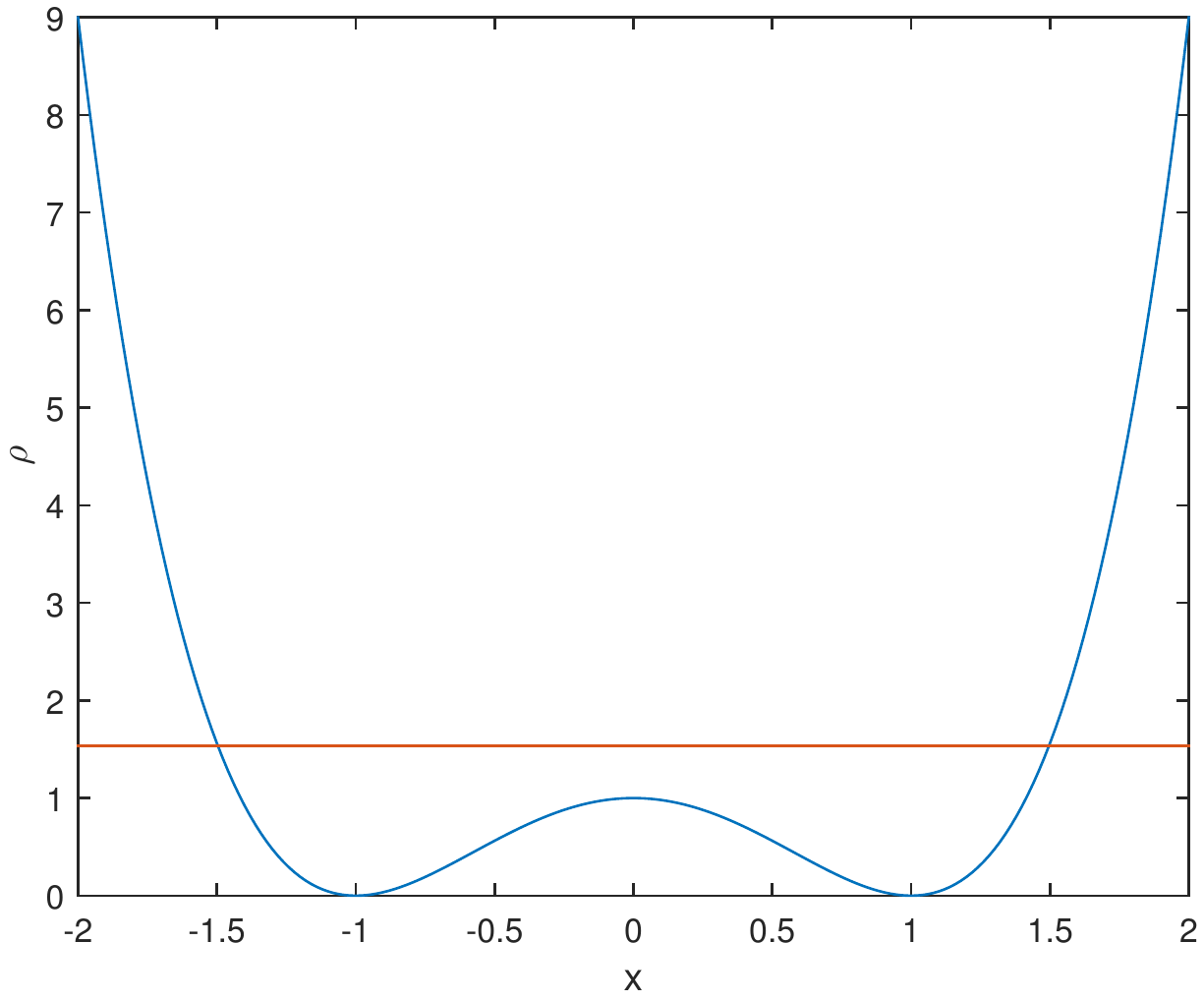}
\put(88,50){$\rho^\supply$}\put(33,24){$\rho^\demand$}
\end{overpic}\quad\qquad
\begin{overpic}[width=0.47\textwidth]{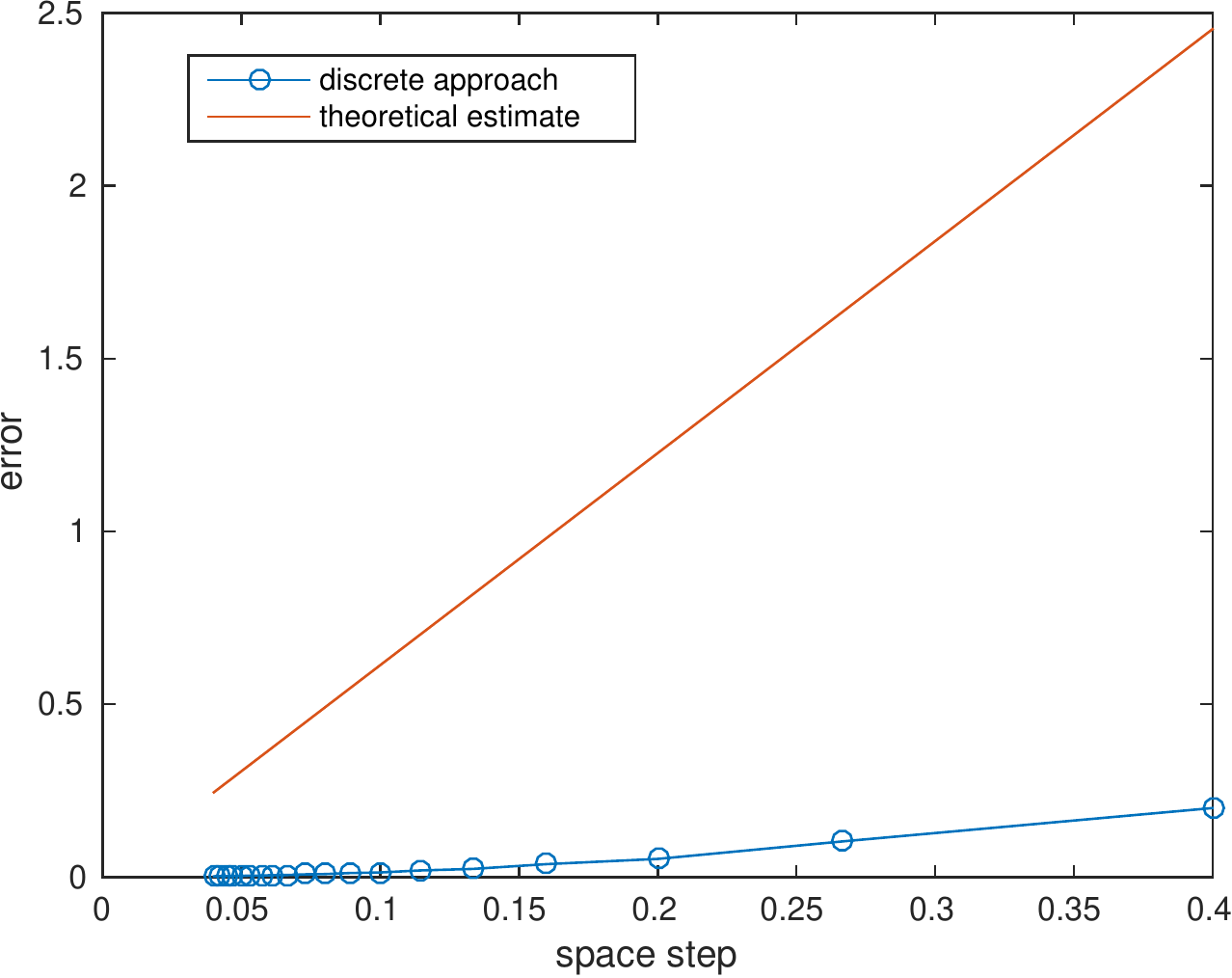}
\end{overpic}} 
\caption{Exact vs.\ approximate Wasserstein distance. Functions $\rho^\supply$, $\rho^\demand$ (left) and convergence of $|W-\mathcal H|$ as $\Dx\to 0$ (right).}
\label{fig:test0}
\end{figure}

In the following we test the discrete approach described above against a one-dimensional problem where the Wasserstein distance can be analytically computed. 
We define
$$
\rho^\supply(x)=\left\{
\begin{array}{ll}
x^4-2x^2+1, & \ x\in[-2,2]\\
0, & \ \text{otherwise}
\end{array}
\right. \qquad\qquad\text{and}\qquad\qquad
\rho^\demand(x)\equiv\frac{23}{15},
$$
see Fig.\ \ref{fig:test0}(left). 
Note that the total mass is equal, i.e.\ $M=\int_{\R}\rho^\supply=\int_{\R}\rho^\demand=\frac{92}{15}$. The exact Wasserstein distance between the two densities can be easily computed by using \eqref{W1Da}, obtaining $W(\rho^\supply,\rho^\demand)=3.2$. In Fig.\ \ref{fig:test0}(right) we report the value of the error $|W-\mathcal H|$ as a function of the space step $\Dx$ used to discretize the interval $[-2,2]$, and we compare it with the theoretical estimate given by Prop.\ \ref{prop:W-H}.
We note that in this special case the measured convergence rate is superlinear and the error is much lower than the theoretical estimate.

%%%%%%%%%%%%%%%%%%%%%%%%%%%%%%%%%%%%%%%%%%
%%%%%%%%%%%%%%%%%%%%%%%%%%%%%%%%%%%%%%%%%%
%%%%%%%%%%%%%%%%%%%%%%%%%%%%%%%%%%%%%%%%%%

\section{Sensitivity analysis.}\label{sec:sensitivity}
In this section we employ the discrete approach described in Sec.\ \ref{sec:H} to perform a sensitivity analysis of the LWR model. 
%Although this approach is time consuming, it behaves well and does not introduce errors other than the ones related to the network discretisation (by the way, unavoidable in the numerical solution of the associated LWR model). 
To solve the LP problem we used the GLPK\footnote{https://www.gnu.org/software/glpk/} free C library.

\medskip

For numerical tests we consider the ``Manhattan''-like two-way road network depicted in Fig.\ \ref{fig:manhattan_vuota}. This choice is motivated by the fact that it allows one to easily compare networks of different size. Given the number $\Nnl$ of junctions per side, we get $4\Nnl(\Nnl-1)$ roads and $\Nnl^2$ junctions. Roads are numbered starting from those going rightward, then leftward, upward, and finally downward. The length of each road is $L_\edge=1$ and, if not otherwise stated, $\Dx=0.1$ ($\Nce=10$, $J=40\Nnl(\Nnl-1)$).
\begin{figure}[h!]
\centerline{
\begin{overpic}[width=0.85\textwidth]{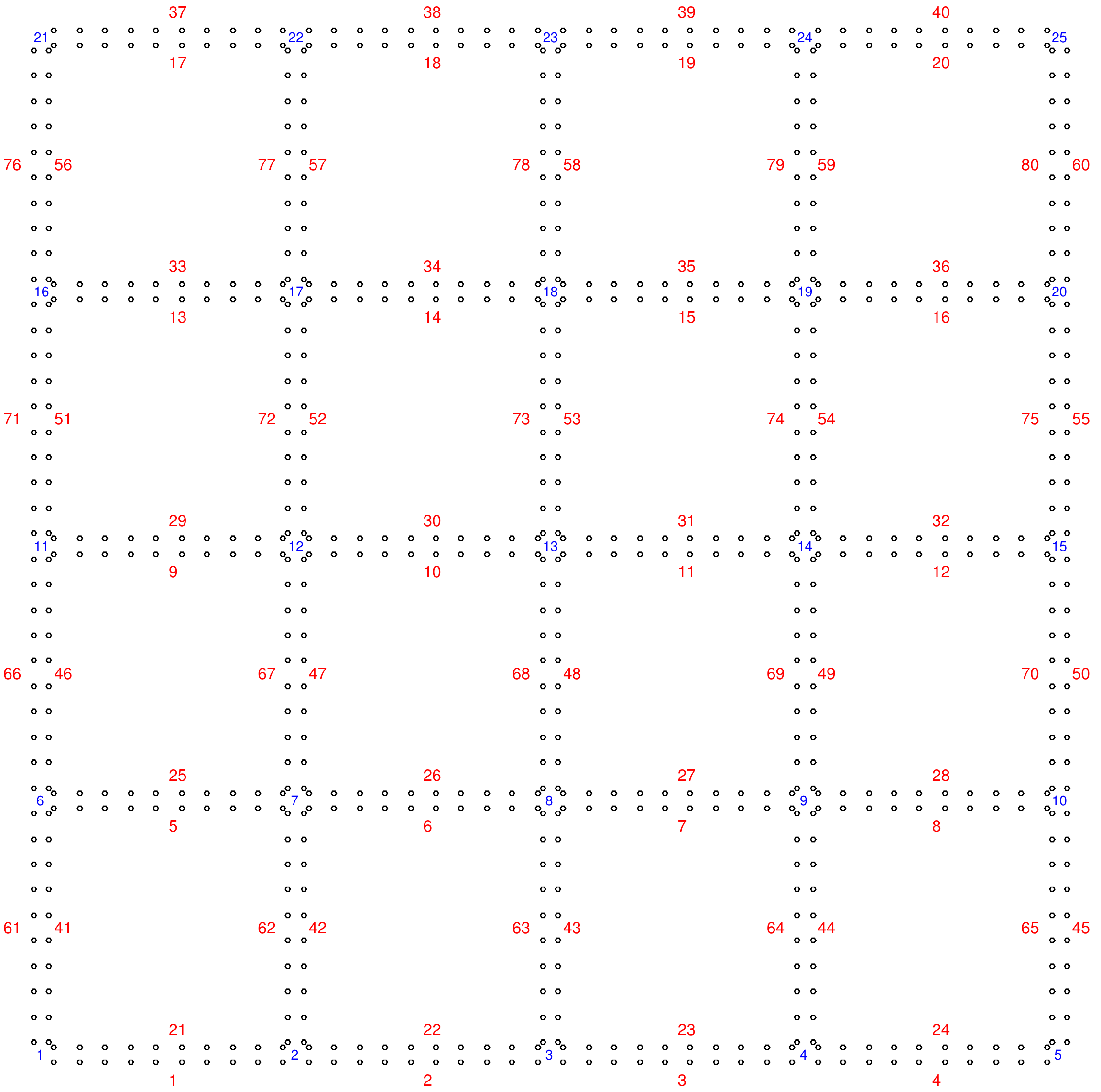}
\put(17,0.5){$\rightarrow$}\put(11.5,4.7){$\leftarrow$}
\put(5,17.5){$\uparrow$}\put(0.4,11){$\downarrow$}
\end{overpic}
}
\caption{Manhattan-like road network with $\Nnl=5$ and $J_\edge=10$. We draw the centers of the cells and report the numbering of roads and junctions. Roads are actually two-way, the small gap between lanes going in opposite directions is left for visualization purpose only. Road directions are indicated by the arrows at the bottom-left corner.}
\label{fig:manhattan_vuota}
\end{figure}

In order to fairly compare simulations with different number of vehicles, we report the \textit{normalized} approximate Wasserstein distance 
\begin{equation}
\Hnorm:=\frac{\sum_j\sum_k c_{jk} x_{jk}^*}{M},
\end{equation}
where $M=\sum_j \supply_j=\sum_j \demand_j$, and $x_{jk}^*$ is the solution of the LP problem \eqref{PL}.
%
%
%
%
%
%
%
%
%
%
%%%%%%%%%%%%%%%%%%%%%%%%%%%%%%%%%%%%%%%%%%%%%%%%%%%
%%%%%%%%%%%%%%%%%%%%%%% T1 %%%%%%%%%%%%%%%%%%%%%%%%%%
%%%%%%%%%%%%%%%%%%%%%%%%%%%%%%%%%%%%%%%%%%%%%%%%%%%

\subsection{Sensitivity to initial data.}\label{sec:sens.ID.R1}
In this test we measure the sensitivity to the initial position of vehicles. 
The goal is to quantify the impact of a possible error in locating vehicles at initial time (but still catching the correct amount of vehicles). In addition, this preliminary test aims at investigating some conceptual and numerical aspects of the proposed procedure. In particular we show the difference between Wasserstein and $L^1$ distance (see Section \ref{sec:motivations}) and we study the convergence $\Hnorm \to W$ as $\Dx\to 0$ (see Section \ref{sec:erroranalysis}). 

The parameters which remain fixed in this test are
\begin{itemize}
\item \emph{Fundamental diagram}: $\sigma=0.3$ and $\fmax=0.25$ (see \eqref{FD}).
\item \emph{Distribution matrix}: 
$$
\alpha^\vertex_{\textsc{r} \textsc{r}^\prime}=\frac{1}{\Nvout},\qquad \forall \vertex\in\mathcal V,\quad \textsc{r}=1,\ldots,\Nvinc, \quad \textsc{r}^\prime=1,\ldots,\Nvout.
$$
\end{itemize}
We consider the following two initial conditions, see Fig.\ \ref{fig:test1_ID}:
for all $\edge\in\mathcal{E}$ and $j=1,\ldots,\Nce/2$,
\begin{equation}\label{test1_ID}
\begin{array}{lr}
\rho^{\supply,0}_{\edge,j} = \left\{\begin{array}{ll}
0.5, &\mbox{ on rightward roads}\\
\smallskip
0, & \mbox{ elsewhere},
\end{array}\right.
&\quad\quad
\rho^{\demand,0}_{\edge,j} = \left\{\begin{array}{ll}
0.5, &\mbox{ on leftward roads}\\
\smallskip
0, & \mbox{ elsewhere}.
\end{array}\right.
\end{array}
\end{equation}
\begin{figure}[h!]
\centerline{
\includegraphics[width=0.447\textwidth]{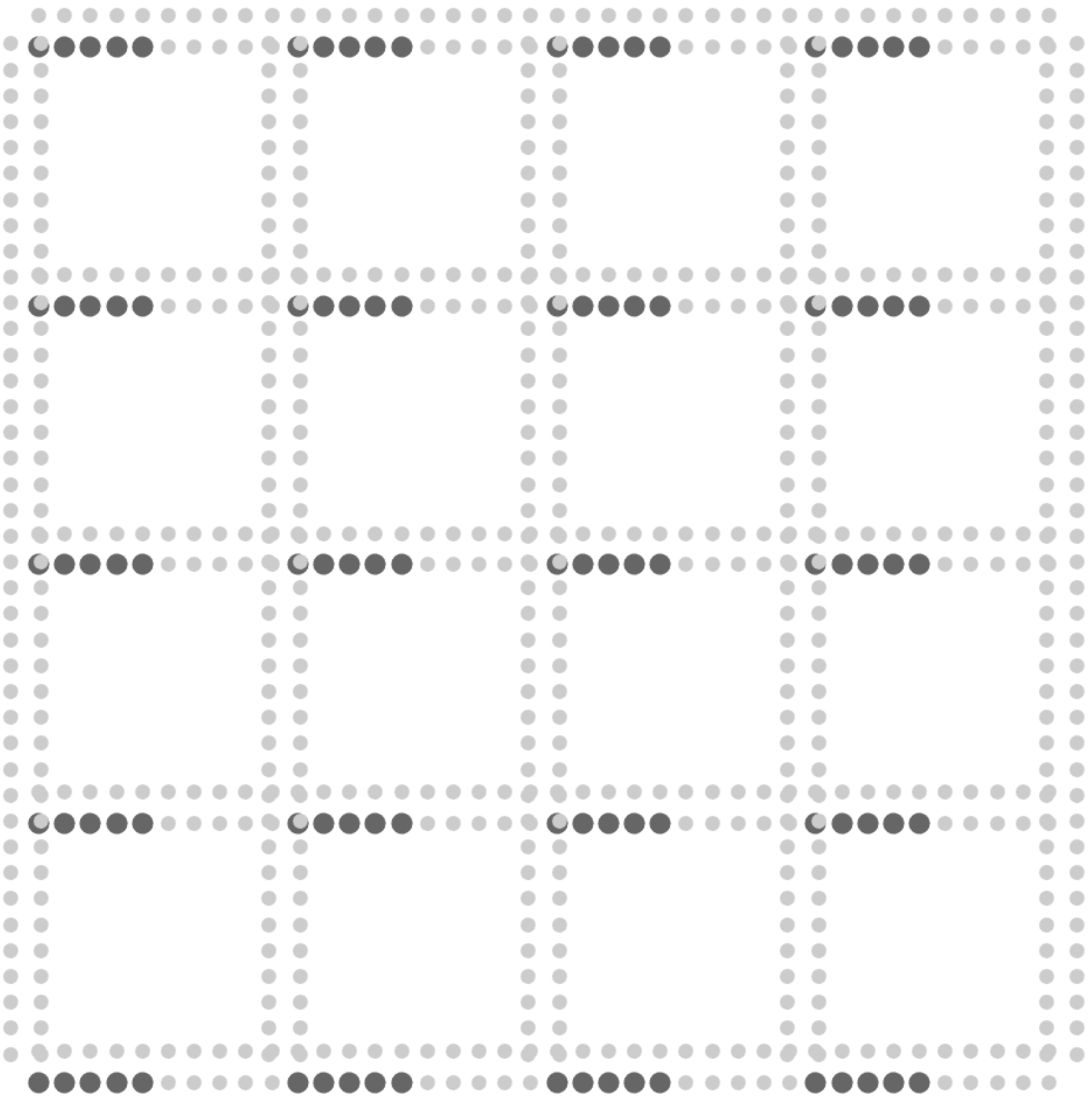}\qquad
\includegraphics[width=0.49\textwidth]{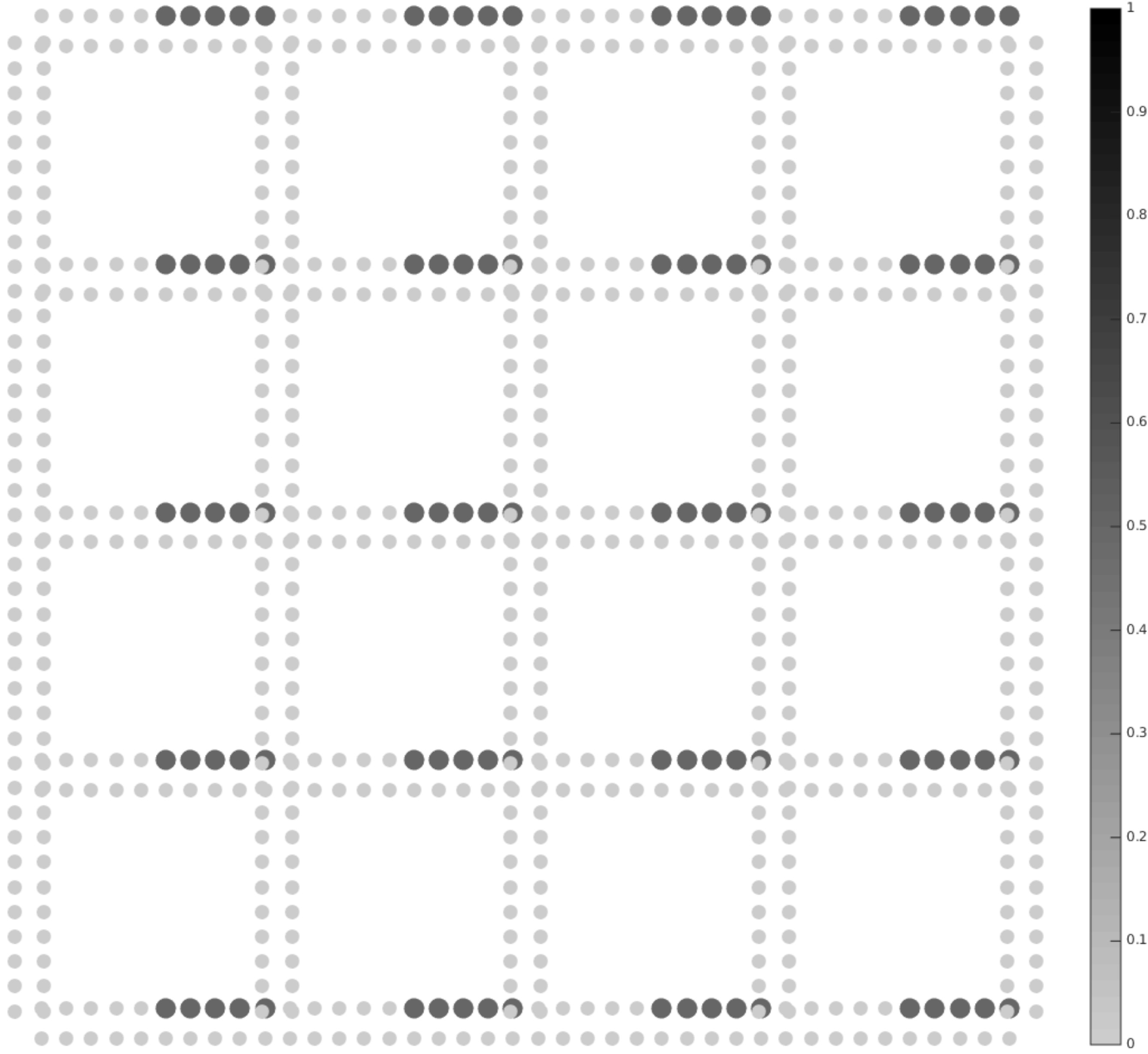}
}
\caption{Sensitivity to initial data. $\rho^{\supply,0}$ (left) and $\rho^{\demand,0}$ (right).} 
\label{fig:test1_ID}
\end{figure}
\begin{rem}\label{rem:tuttoazero}
Due to the uniform traffic distribution at junctions, the density tends to become constant on the whole network as $t\to +\infty$, regardless of the initial datum. As a consequence, we expect that the distance  between $\rho^\supply(t)$ and $\rho^\demand(t)$ (no matter how defined) tends to 0 as $t\to +\infty$. 
\end{rem}

\subsubsection{Comparison with $L^1$ distance.}\label{sec:L1}
In this test we compare the approximate Wasserstein distance with the discrete $L^1$ distance (normalized with respect to the mass as well), here denoted by $\hat{\mathcal L}^1$ and defined by
\begin{equation}\label{l1_error}
\hat{\mathcal L}^1(\rho^\supply(\cdot,t),\rho^\demand(\cdot,t)):=\frac{\Dx}{M}\sum_{\edge\in\mathcal{E}}\sum_{j=1}^{\Nce}|\rho^{\supply}_{\edge,j}(t)-\rho^{\demand}_{\edge,j}(t)|.
\end{equation}
Functions $t\to \hat{\mathcal L}^1(\rho^\supply(\cdot,t),\rho^\demand(\cdot,t))$ and $t\to \Hnorm(\rho^\supply(\cdot,t),\rho^\demand(\cdot,t))$ are shown in Fig.\ \ref{fig:test1_cfr_L1_vs_W} for two different network size. 
\begin{figure}[h!]
\centerline{
\includegraphics[width=0.45\textwidth]{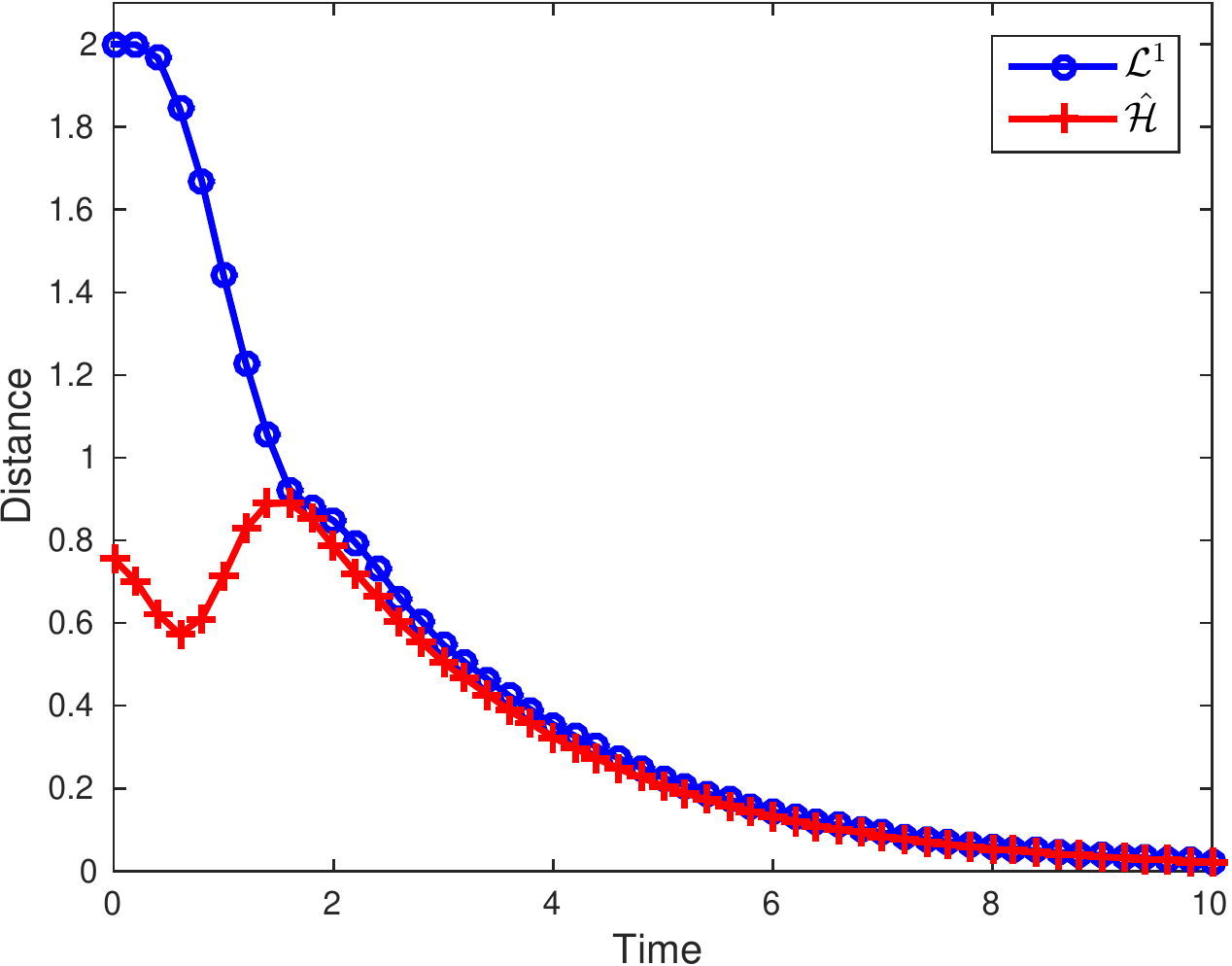} \qquad
\includegraphics[width=0.45\textwidth]{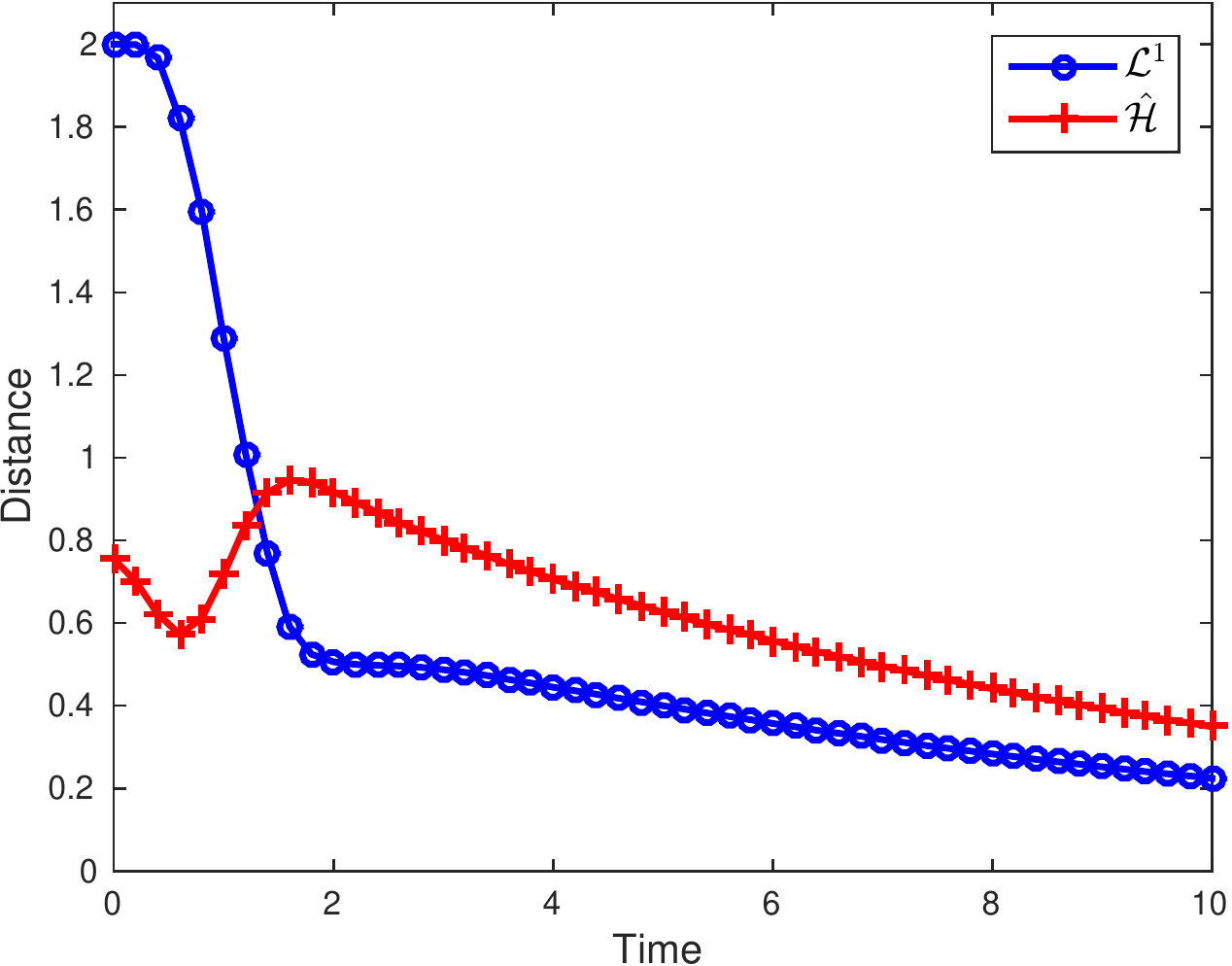}
}
\caption{Sensitivity to initial data ($\hat{\mathcal L}^1$ vs.\ $\Hnorm$). Comparison between functions \mbox{$t\to \hat{\mathcal L}^1(\rho^\supply(\cdot,t),\rho^\demand(\cdot,t))$} and \mbox{$t\to \Hnorm(\rho^\supply(\cdot,t),\rho^\demand(\cdot,t))$} for $\Nnl=3$ (left) and  $\Nnl=5$ (right).} 
\label{fig:test1_cfr_L1_vs_W}
\end{figure}
Initially, the $\hat{\mathcal L}^1$ distance shows a plateau, which lasts until the supports of the densities $\rho^\supply$ and $\rho^\demand$ are disjoint. This is not the case of the Wasserstein distance which instead immediately decreases. 
After that, the supports of the two densities start to overlap but the regions with maximal density move away from each other, see Fig.\ \ref{fig:test1_cfr_L1_vs_W_densities}. When this process ends, we get the maximal value of the Wasserstein distance and the change of slope of the $\hat{\mathcal L}^1$ distance. 
Later on, the two densities uniformly distribute along the network and the two distances go smoothly to 0.
\begin{figure}[h!]
\centerline{
\includegraphics[width=0.41\textwidth]{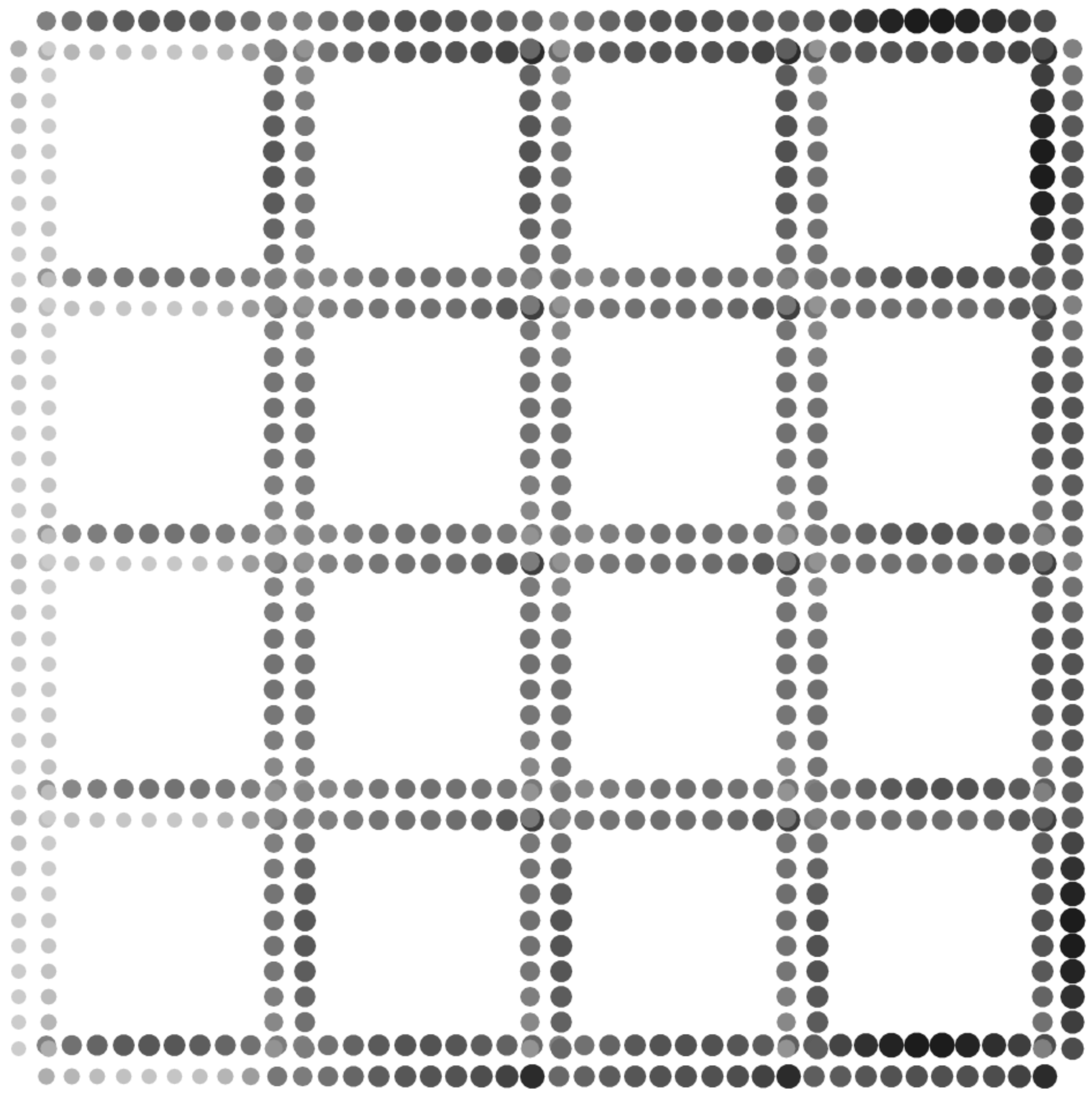} \qquad\quad
\includegraphics[width=0.445\textwidth]{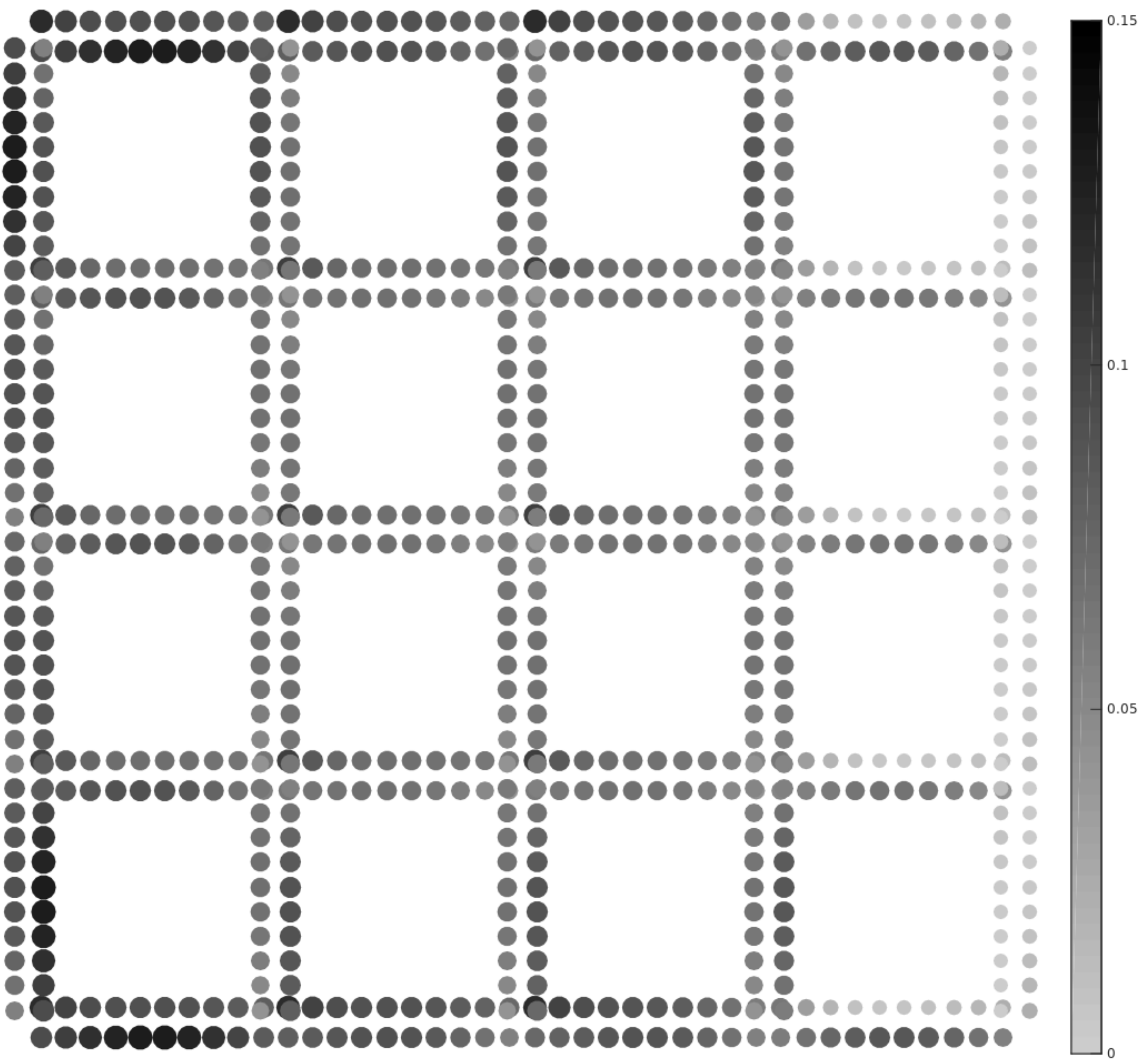}
}
\caption{Sensitivity to initial data ($\hat{\mathcal L}^1$ vs.\ $\Hnorm$). $\rho^\supply$ (left) and $\rho^\demand$ (right) at time $T=1.8$.} 
\label{fig:test1_cfr_L1_vs_W_densities}
\end{figure}

\subsubsection{Numerical convergence as $\Dx\rightarrow 0$.}\label{sec:num_conv_Dx}
In this test we consider a small network ($\Nnl=3$) and we compute the Wasserstein distance $\Hnorm(\rho^\supply,\rho^\demand)$ for different values of $\Nce$. 
Fig.\ \ref{fig:test1_convergenza} shows the functions $t\to\Hnorm(\rho^\supply(\cdot,t),\rho^\demand(\cdot,t))$ for $J_\edge=10,20,40,80$ and $\Nce\to\Hnorm(\rho^\supply(\cdot,T),\rho^\demand(\cdot,T))$ at fixed time $T=1.4$. 
\begin{figure}[h!]
\centerline{
\includegraphics[width=0.45\textwidth]{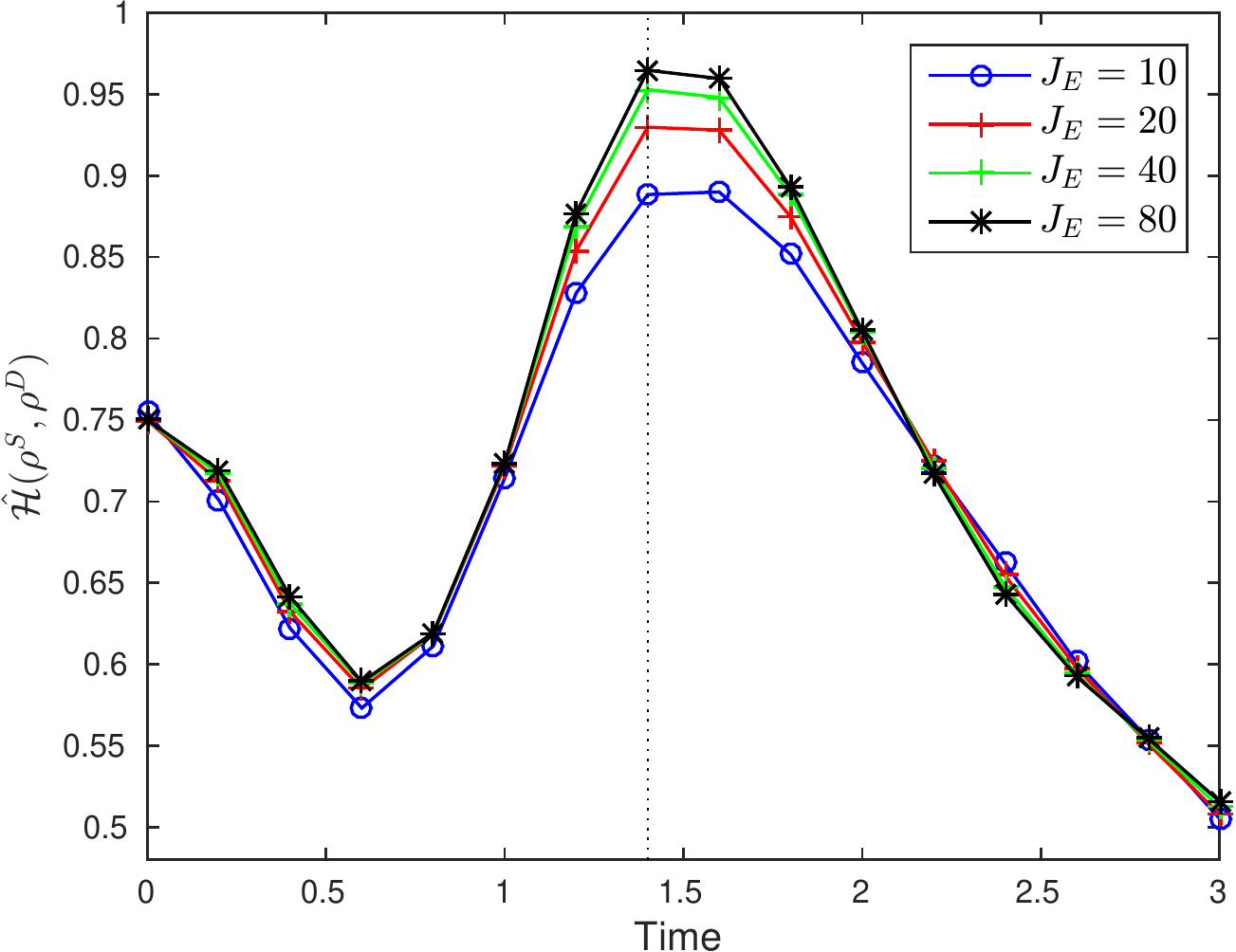} \qquad
\includegraphics[width=0.45\textwidth]{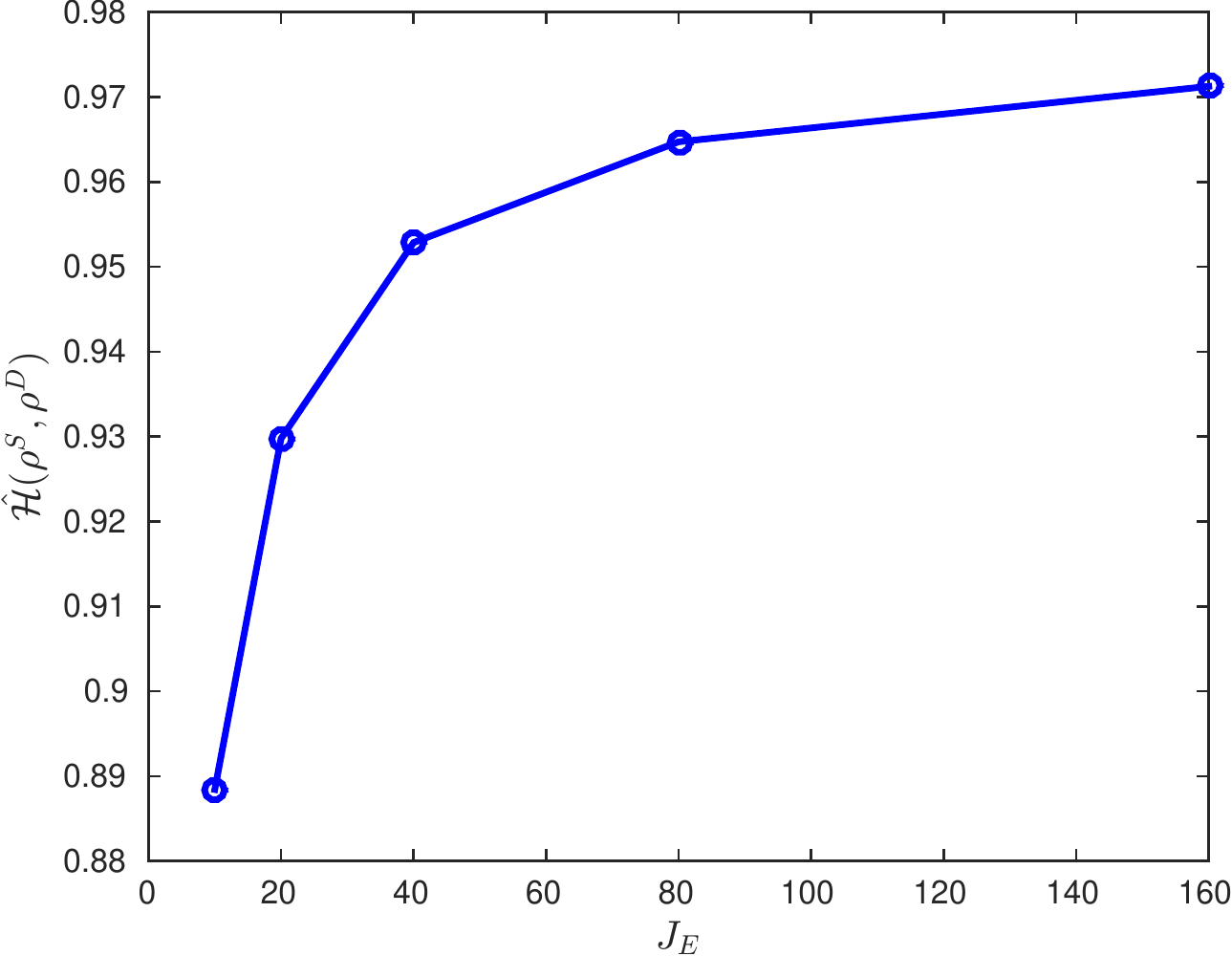}
}
\caption{Sensitivity to initial data (convergence). Function $t\to\Hnorm(\rho^\supply(\cdot,t),\rho^\demand(\cdot,t))$ for different values of $J_\edge$ (left) and function $\Nce\to\Hnorm(\rho^\supply(\cdot,T),\rho^\demand(\cdot,T))$ with $T=1.4$ (right).} 
\label{fig:test1_convergenza}
\end{figure}
Fig.\ \ref{fig:test1_convergenza}(left) suggests a relatively small sensitivity to the space step. We can safely assume that the difference between the values of $\Hnorm$ obtained with $\Nce=10$ and $\Nce=160$ is lower than 10\% with respect to the largest of the two values.
We get similar results also for larger networks. 
The numerical convergence of $\Hnorm=\Hnorm(\Nce)$ as $\Nce\to +\infty$ is also clearly visible in Fig.\ \ref{fig:test1_convergenza}(right).

%We are then allow to assume as a "good" estimate $\Hnorm(10\times \mbox{card } \mathcal E)$. 
In the next sections, the sensitivity analysis will be obtained with $\Nce=10$ which seems to be a good compromise between accuracy of the results and computational costs.

%%%%%%%%%%%%%%%%%%%%%%%%%%%%%%%%%%%%%%%%%%%%%%%%%%%%%
%%%%%%%%%%%%%%%%%%%%%%% T2 %%%%%%%%%%%%%%%%%%%%%%%%%%
%%%%%%%%%%%%%%%%%%%%%%%%%%%%%%%%%%%%%%%%%%%%%%%%%%%%%
%Sensitivity to fundamental diagram
\subsection{Sensitivity to fundamental diagram.}\label{sec:sens.FD}
In this test we measure the sensitivity to the two parameters of the fundamental diagram, namely $\sigma$ and $\fmax$. 
The goal is to quantify the impact of a possible error in measuring the capacity of the roads or in describing the drivers behavior. \REV{Note that the linear structure of the fundamental diagram used here (see Fig.\ \eqref{fig:FD}) does not play any special role and any other fundamental diagram could be considered, as long as it is duly parametrized.}

The parameters which remain fixed in this test are
\begin{itemize}
\item \emph{Initial density}: 
$$
\rho^0_{\edge,j} = \left\{\begin{array}{ll}
0.5 &\mbox{ on rightward roads},\\
\smallskip
0 & \mbox{ elsewhere},
\end{array}\right.  \qquad
\edge\in\mathcal{E},\quad j=1\ldots,\Nce.
$$
\item \emph{Distribution matrix}: 
$$
\alpha^\vertex_{\textsc{r} \textsc{r}'}=\frac{1}{\Nvout}, \qquad \forall \vertex\in\mathcal V,\quad  \textsc{r}=1,\ldots,\Nvinc, \quad \textsc{r}'=1,\ldots,\Nvout.
$$ 
\end{itemize}

In Fig.\ \ref{fig:test2_DF}(left) we report the distance between the solutions $\rho^\supply$ and $\rho^\demand$ at time $T=20$ obtained with $\fmax^\supply=0.25$, $\sigma^\supply=0.3$ and $\fmax^\demand=0.25$, $\sigma^\demand\in[0.15,0.5]$, respectively.  
In Fig.\ \ref{fig:test2_DF}(right) we report the distance between the two solutions at time $T=20$ obtained with $\sigma^\supply=0.3$, $\fmax^\supply=0.25$, and $\sigma^\demand=0.3$, $\fmax^\demand\in[0.15,0.4]$, respectively.
\begin{figure}[h!]
\begin{center}
\begin{tabular}{lr}
\psfig{file=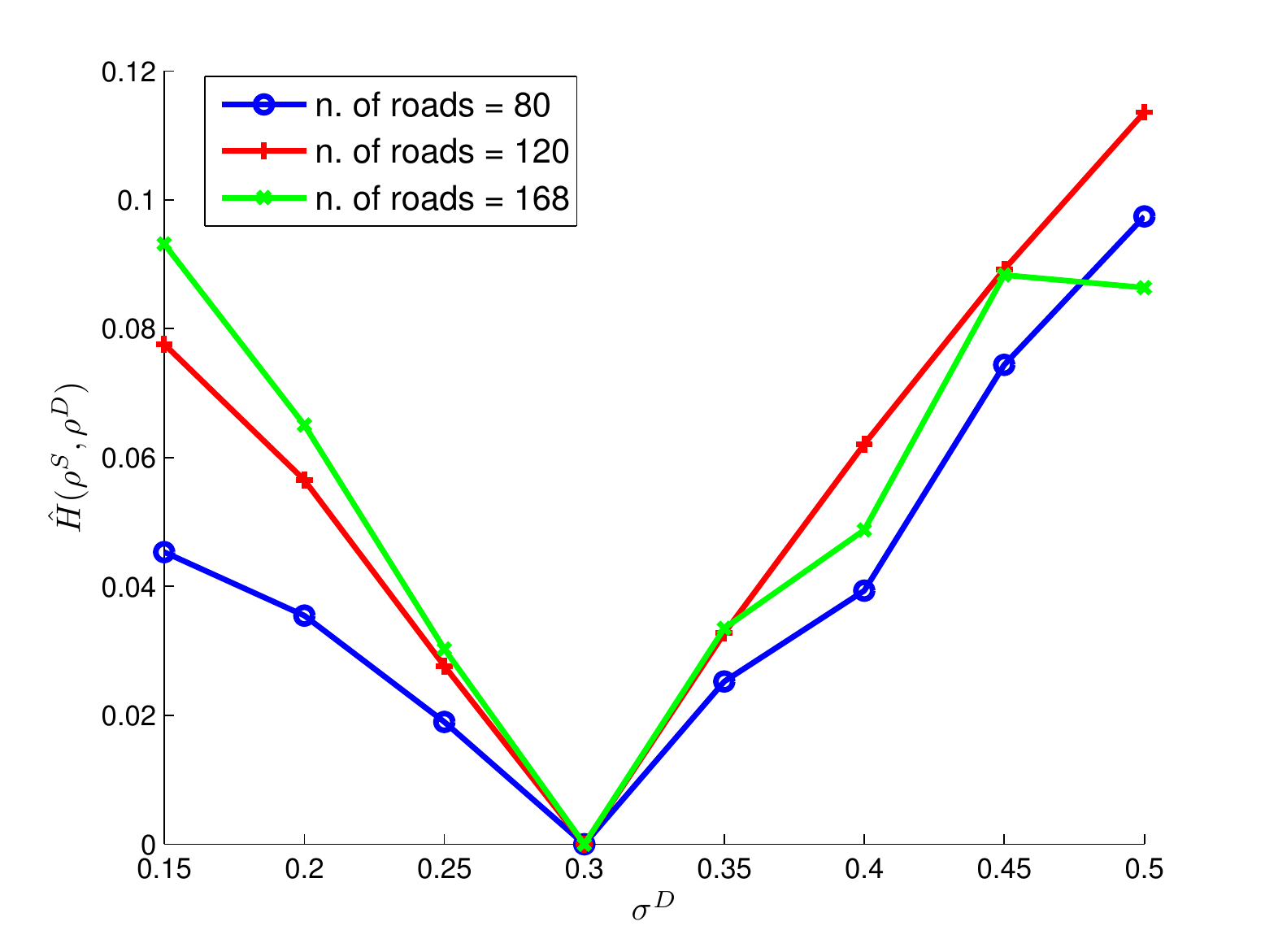,width=2.4in} &
\psfig{file=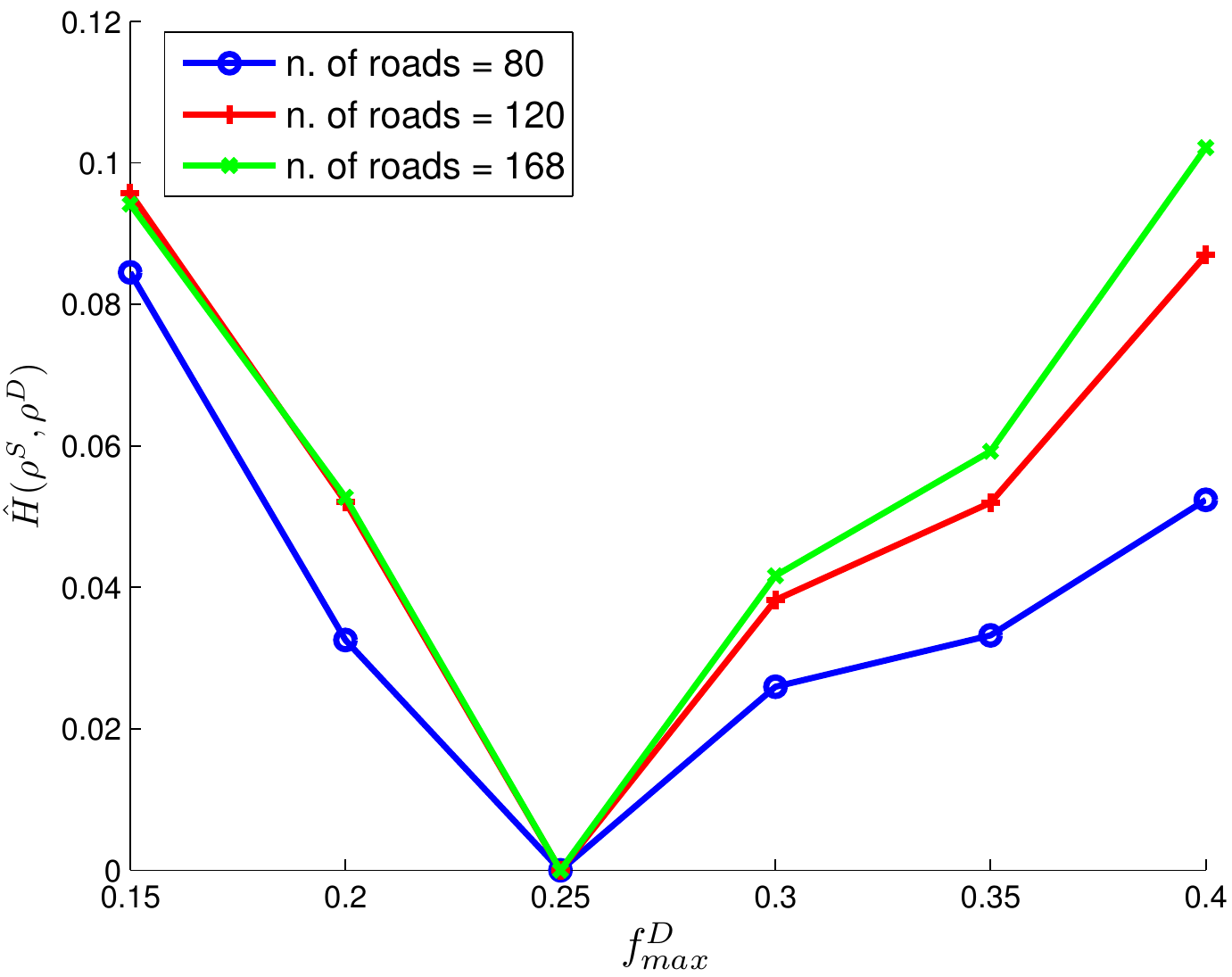,width=2.4in}
\end{tabular}
\end{center}
\caption{Sensitivity to fundamental diagram. Function \mbox{$\sigma^\demand\to \Hnorm(\rho^\supply(\cdot,T),\rho^\demand(\cdot,T))$} (left) and, \mbox{$\fmax^\demand\to \Hnorm(\rho^\supply(\cdot,T),\rho^\demand(\cdot,T))$} (right), for $\Nnl=5,6,7$.} 
\label{fig:test2_DF}
\end{figure}
Errors in the calibration of $\sigma$ or $\fmax$ lead to similar discrepancies, which are again amplified by the network size. Discrepancies grow approximately linearly with respect to both $|\sigma^\demand-\sigma^\supply|$ and $|\fmax^\demand-\fmax^\supply|$.

In Fig.\ \ref{fig:test2_DF_WT0} we report the distance between the solutions $\rho^\supply$ and $\rho^\demand$ obtained with $\fmax^\supply=\fmax^\demand=0.25$, $\sigma^\supply=0.3$, $\sigma^\demand=0.2$ (left), and $\sigma^\supply=\sigma^\demand=0.3$, $\fmax^\supply=0.25$, $\fmax^\demand=0.3$ (right), as a function of time.
\begin{figure}[htp]
\begin{center}
\begin{tabular}{cc}
\psfig{file=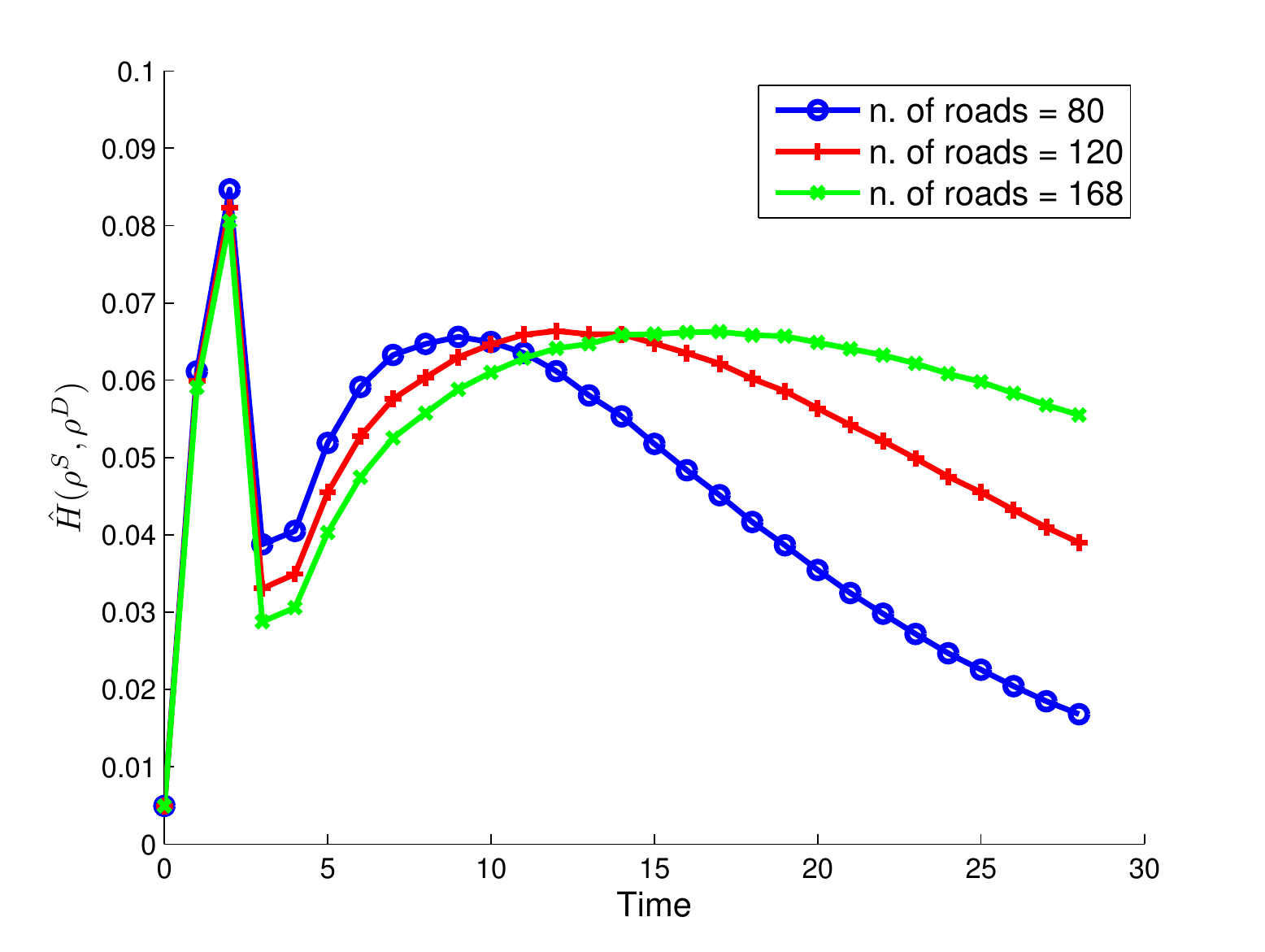,width=2.4in}
&
\psfig{file=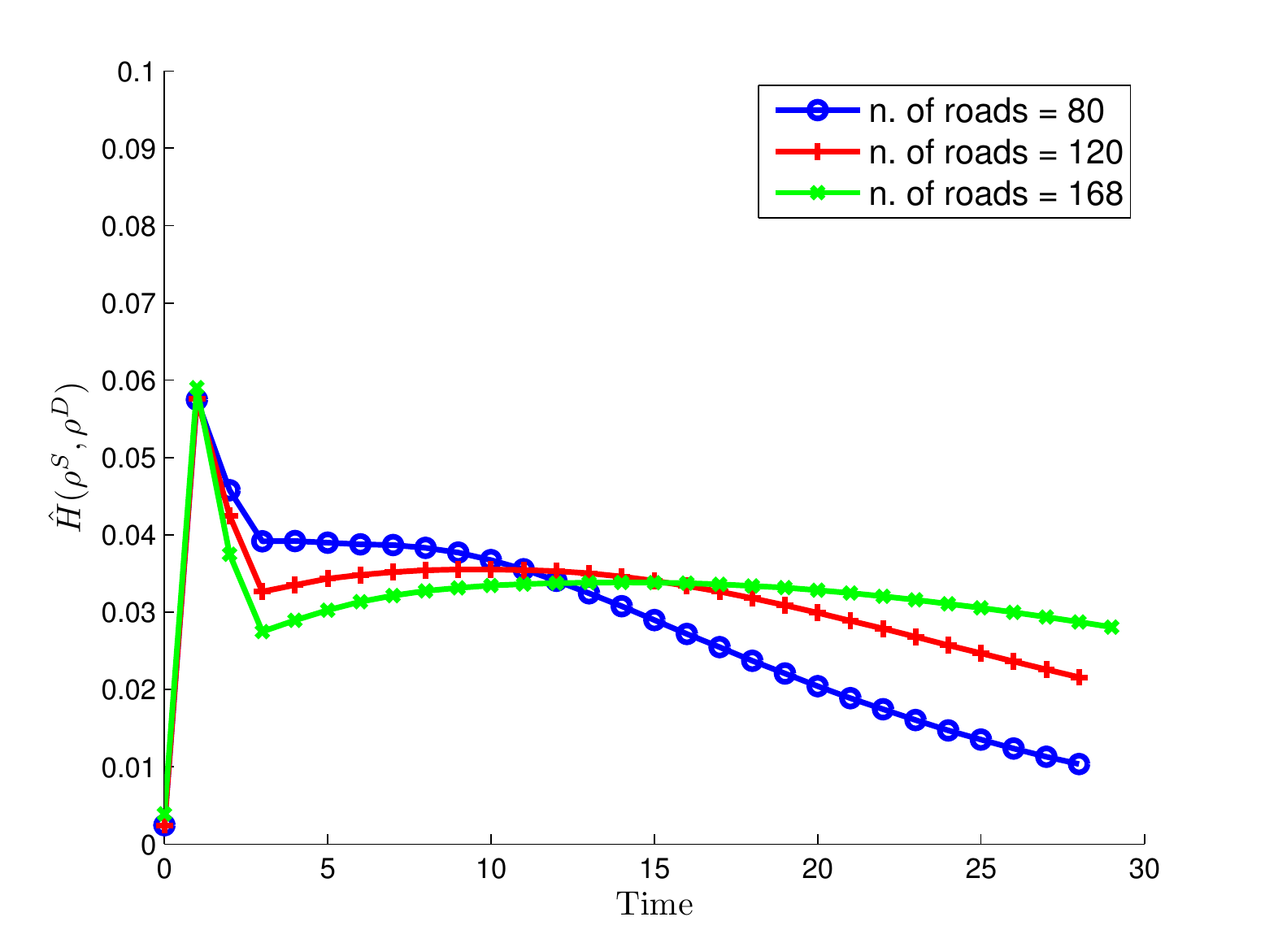,width=2.4in}
\end{tabular}
\end{center}
\caption{Sensitivity to fundamental diagram. With respect to $\sigma$ (left) and $\fmax$ (right). Function \mbox{$t\to \Hnorm(\rho^\supply(\cdot,t),\rho^\demand(\cdot,t))$} for $\Nnl=5,6,7$.} 
\label{fig:test2_DF_WT0}
\end{figure}
Again, we see that the distances tend to 0 as $t \to\infty$ because vehicles spread across the networks toward a constant stationary density distribution (see Remark \ref{rem:tuttoazero}), and the size of the network affects the time scale only.

%%%%%%%%%%%%%%%%%%%%%%%%%%%%%%%%%%%%%%%%%%%%%%%%%%%%%
%%%%%%%%%%%%%%%%%%%%%%% T3 %%%%%%%%%%%%%%%%%%%%%%%%%%
%%%%%%%%%%%%%%%%%%%%%%%%%%%%%%%%%%%%%%%%%%%%%%%%%%%%%
%Sensitivity to traffic distribution at junctions
\subsection{Sensitivity to the distribution matrix.}\label{sec:sens.alpha}
In this test we measure the sensitivity to the distribution coefficients at junctions, see Sec.\ \ref{sec:themodel}. 
The goal is to quantify the impact of a possible error in the knowledge of the path choice at junctions.

The parameters which remain fixed in this test are
\begin{itemize}
\item \emph{Initial density}: $\rho^0_{\edge,j} =0.5$, \quad $\edge\in\mathcal{E}$, $j=1,\ldots,\Nce$. 
\item \emph{Fundamental diagram}: $\sigma=0.3$ and $\fmax=0.25$.
\end{itemize}

Supply distribution $\rho^\supply$ is obtained by means of equidistributed coefficients
$$
\alpha^{\supply,\vertex}_{\textsc{r} \textsc{r}'}=\frac{1}{\Nvout}, \quad\forall \vertex\in\mathcal V,\quad \textsc{r}=1,\ldots,\Nvinc, \quad \textsc{r}'=1,\ldots,\Nvout. 
$$
Note that, due to the symmetry of the network and the initial datum, $\rho^\supply\equiv 0.5$ for all $x$ and $t$.

\subsubsection{Single junction.}\label{sec:singlejunctions}
Here demand distribution $\rho^\demand$ is obtained by varying the distribution coefficients at the junction $\bar\vertex$ located at the very center of the network (see, e.g., vertex 13 in Fig.\ \ref{fig:manhattan_vuota}). Variation is performed by means of a scalar parameter $\varepsilon>0$. We have, for all incoming roads $\textsc{r}=1,2,3,4$,
$$
\alpha^{\demand,\bar\vertex}_{\textsc{r} 1}=\frac{1}{\Nvout}+\varepsilon,\quad
\alpha^{\demand,\bar\vertex}_{\textsc{r} 2}=\frac{1}{\Nvout}-\varepsilon,\quad
\alpha^{\demand,\bar\vertex}_{\textsc{r} 3}=\frac{1}{\Nvout}+\varepsilon,\quad
\alpha^{\demand,\bar\vertex}_{\textsc{r} 4}=\frac{1}{\Nvout}-\varepsilon.
$$

In Fig.\ \ref{fig:test3_rhoD} we report the distribution $\rho^\demand$ at time $t=5$ and $t=45$ obtained with $\varepsilon=0.1$, to be compared with the constant distribution $\rho^\supply\equiv 0.5$.
\begin{figure}[h!]
\centerline{
\begin{overpic}[width=0.45\textwidth]{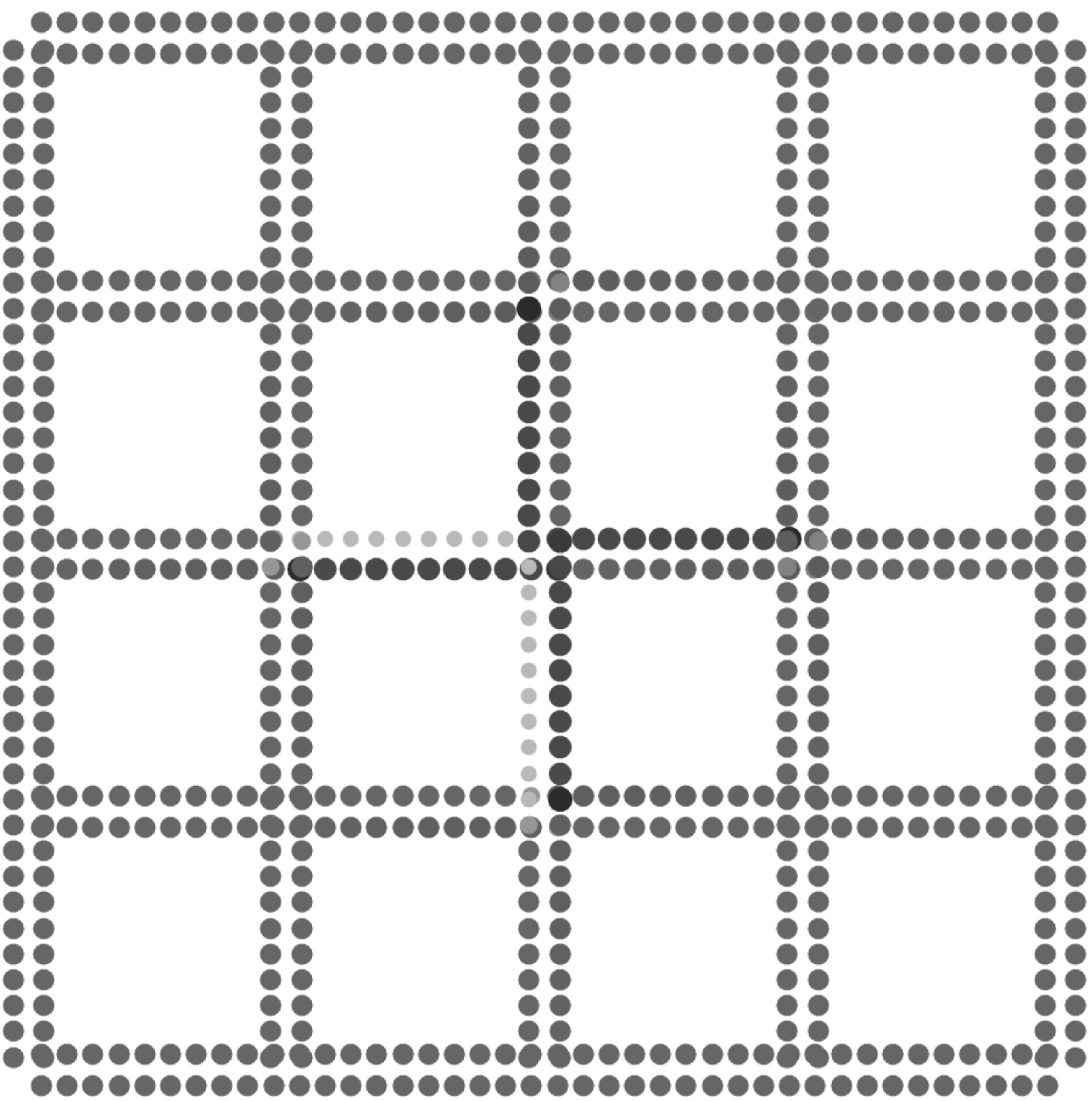}
\put(58,44){$\rightarrow$}\put(59,41.5){\footnotesize 1}
\put(35,44){$\rightarrow$}\put(36,41.5){\footnotesize 1}
\put(58,52){$\leftarrow$}\put(60,55){\footnotesize 2}
\put(35,52){$\leftarrow$}\put(37,55){\footnotesize 2}
\put(52,35){$\uparrow$}\put(54,34){\footnotesize 3}
\put(44.5,35){$\downarrow$}\put(42,35){\footnotesize 4}
\put(52,60){$\uparrow$}\put(54,59){\footnotesize 3}
\put(44.5,60){$\downarrow$}\put(42,60){\footnotesize 4}
\end{overpic}
\qquad
\includegraphics[width=0.49\textwidth]{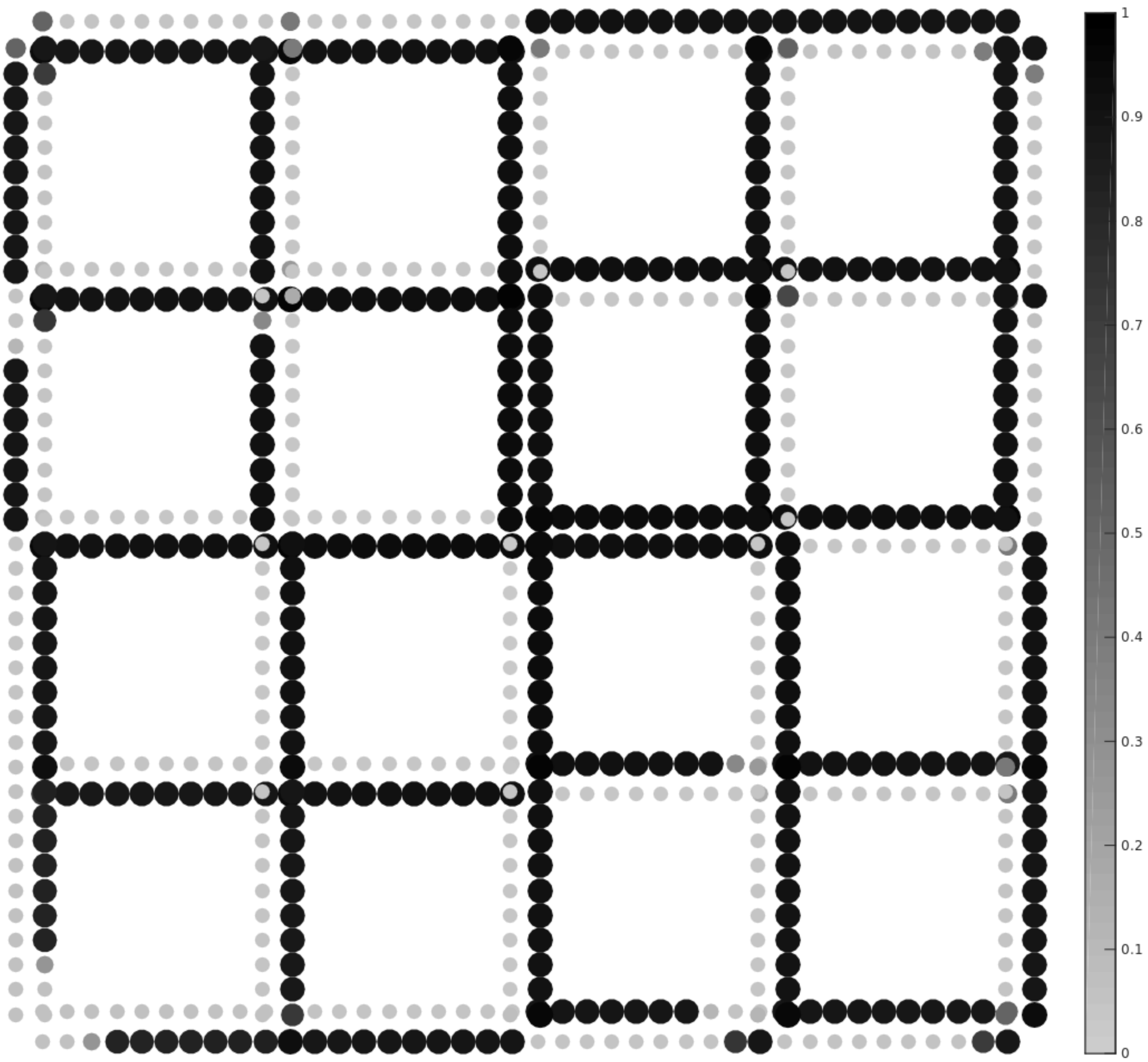}
}
\caption{Sensitivity to distribution matrix (single junction). Density $\rho^\demand$ at time $t=5$ (left) and $t=45$ (right).} 
\label{fig:test3_rhoD}
\end{figure}
Remarkably, a minor local modification of the traffic distribution in a single junction breaks the symmetry and has a great impact on the solution. This time the density does not tend to distribute uniformly across the network and then we expect the distance $W(\rho^\supply,\rho^\demand)$ to increase in time, although the growth cannot continue indefinitely since the distance between two distributions on a finite network is finite.

In Fig.\ \ref{fig:test3_WT_alpha} we show the distance between the two densities as a function of time for $\varepsilon=0.1,\ 0.2$. The distance is indeed increasing and bounded as expected. Moreover  a larger $\varepsilon$ accelerates the growth of the distance. 
Further comments will be given in the following Sec.\ \ref{sec:comparison}.
\begin{figure}[h!]
\begin{center}
\includegraphics[width=0.48\textwidth]{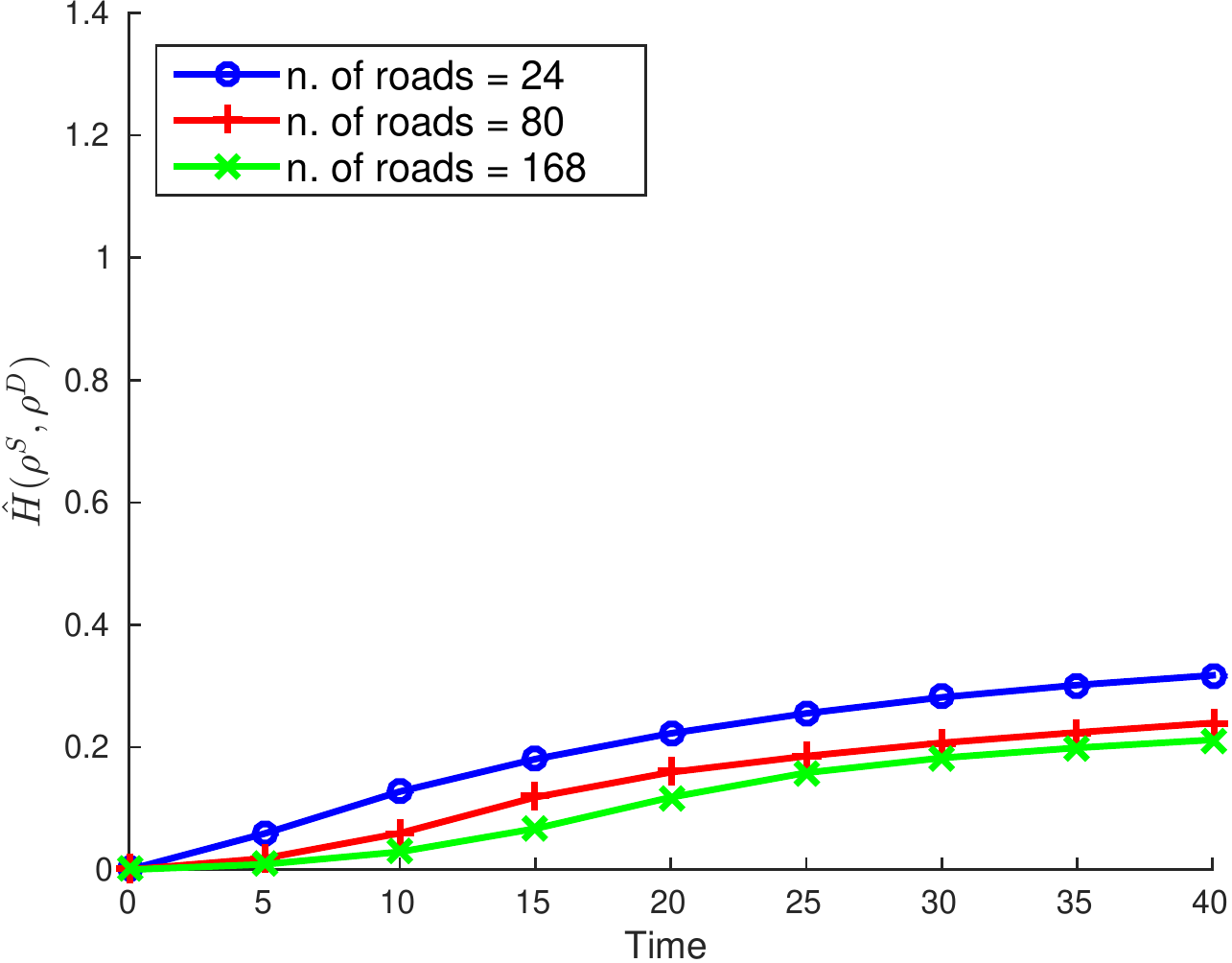}\quad
\includegraphics[width=0.48\textwidth]{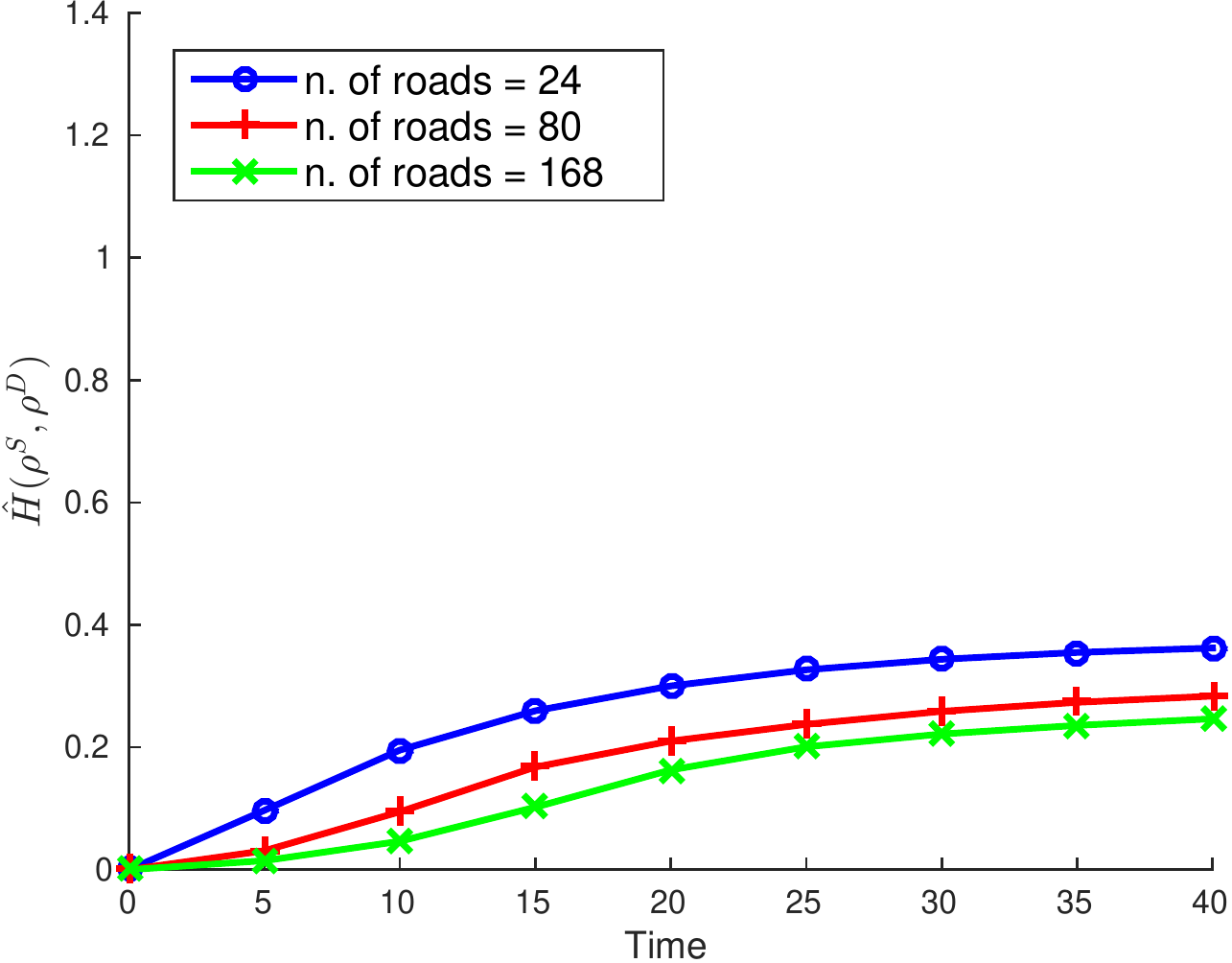}
\end{center}
\caption{Sensitivity to traffic distribution at junctions (single junction). Function \mbox{$t\to \Hnorm(\rho^\supply(\cdot,t),\rho^\demand(\cdot,t))$} for $\Nnl=3,5,7$.  $\varepsilon=0.1$ (left), $\varepsilon=0.2$ (right).} 
\label{fig:test3_WT_alpha}
\end{figure}

%%%%%%%%%%%%%%%%%%%%%%%%%%%%%%%%%%%%%%%%%%%%
\subsubsection{All junctions.}\label{sec:alljunctions}
Let us now modify \emph{all} the distribution coefficients, and not only those at one junction. In the following test we set, for any $\vertex$ and $\textsc{r}=1,2,3,4$,
$$
\alpha^{\demand,\vertex}_{\textsc{r} 1}=\frac{1}{\Nvout}+\varepsilon,\quad
\alpha^{\demand,\vertex}_{\textsc{r} 2}=\frac{1}{\Nvout}-\varepsilon,\quad
\alpha^{\demand,\vertex}_{\textsc{r} 3}=\frac{1}{\Nvout}+\varepsilon,\quad
\alpha^{\demand,\vertex}_{\textsc{r} 4}=\frac{1}{\Nvout}-\varepsilon,
$$
if $\vertex$ is labeled by an odd number and 
$$
\alpha^{\demand,\vertex}_{\textsc{r} 1}=\frac{1}{\Nvout}-\varepsilon,\quad
\alpha^{\demand,\vertex}_{\textsc{r} 2}=\frac{1}{\Nvout}+\varepsilon,\quad
\alpha^{\demand,\vertex}_{\textsc{r} 3}=\frac{1}{\Nvout}-\varepsilon,\quad
\alpha^{\demand,\vertex}_{\textsc{r} 4}=\frac{1}{\Nvout}+\varepsilon,
$$
otherwise (at border junctions only the first two incoming roads $\textsc{r}=1,2$ are considered). 
Results with $\varepsilon=0.1$ are shown in Fig.\ \ref{fig:test3_all_alphas}.
\begin{figure}[h!]
\centerline{
\includegraphics[width=0.45\textwidth]{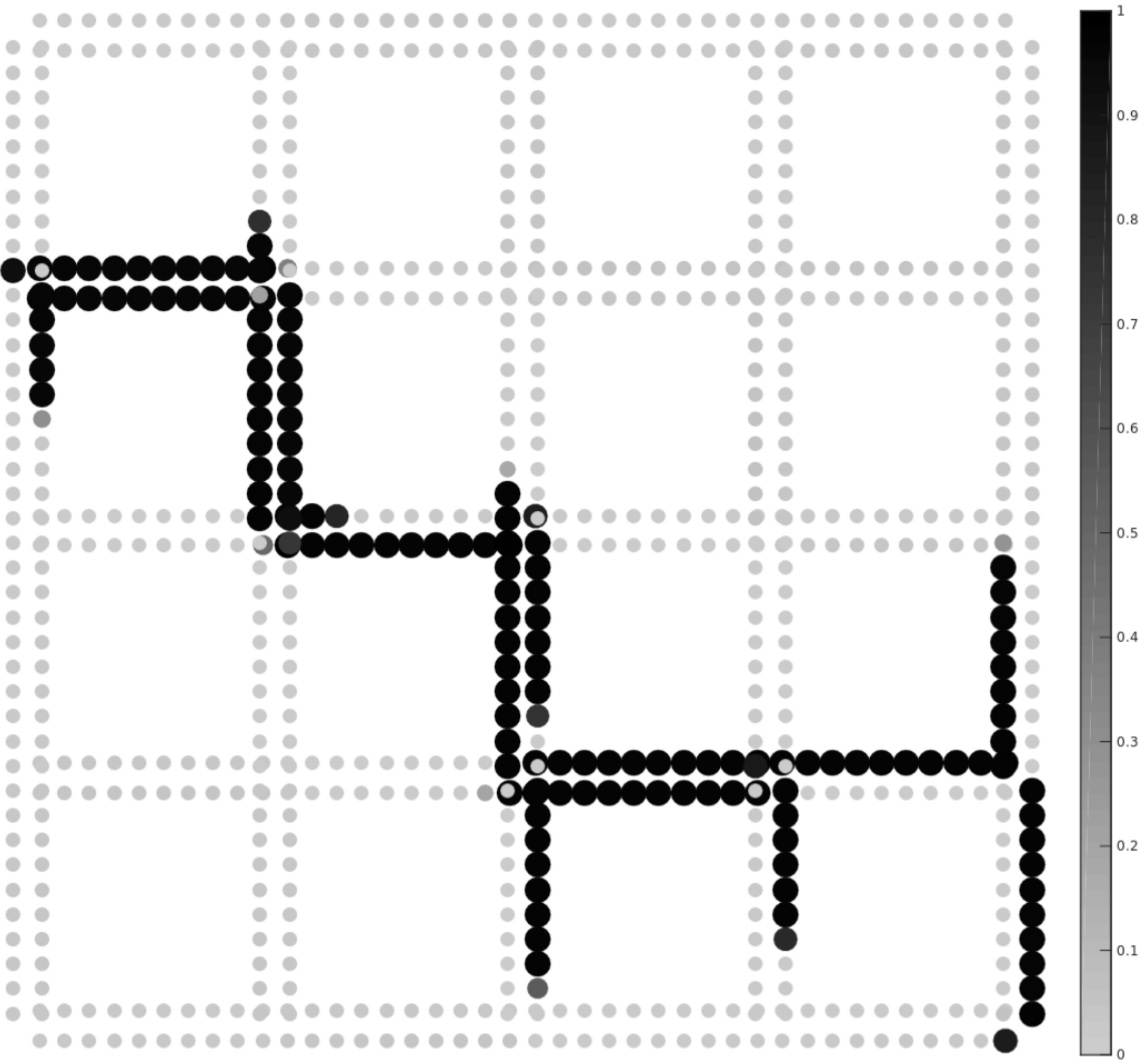}\qquad
\includegraphics[width=0.5\textwidth]{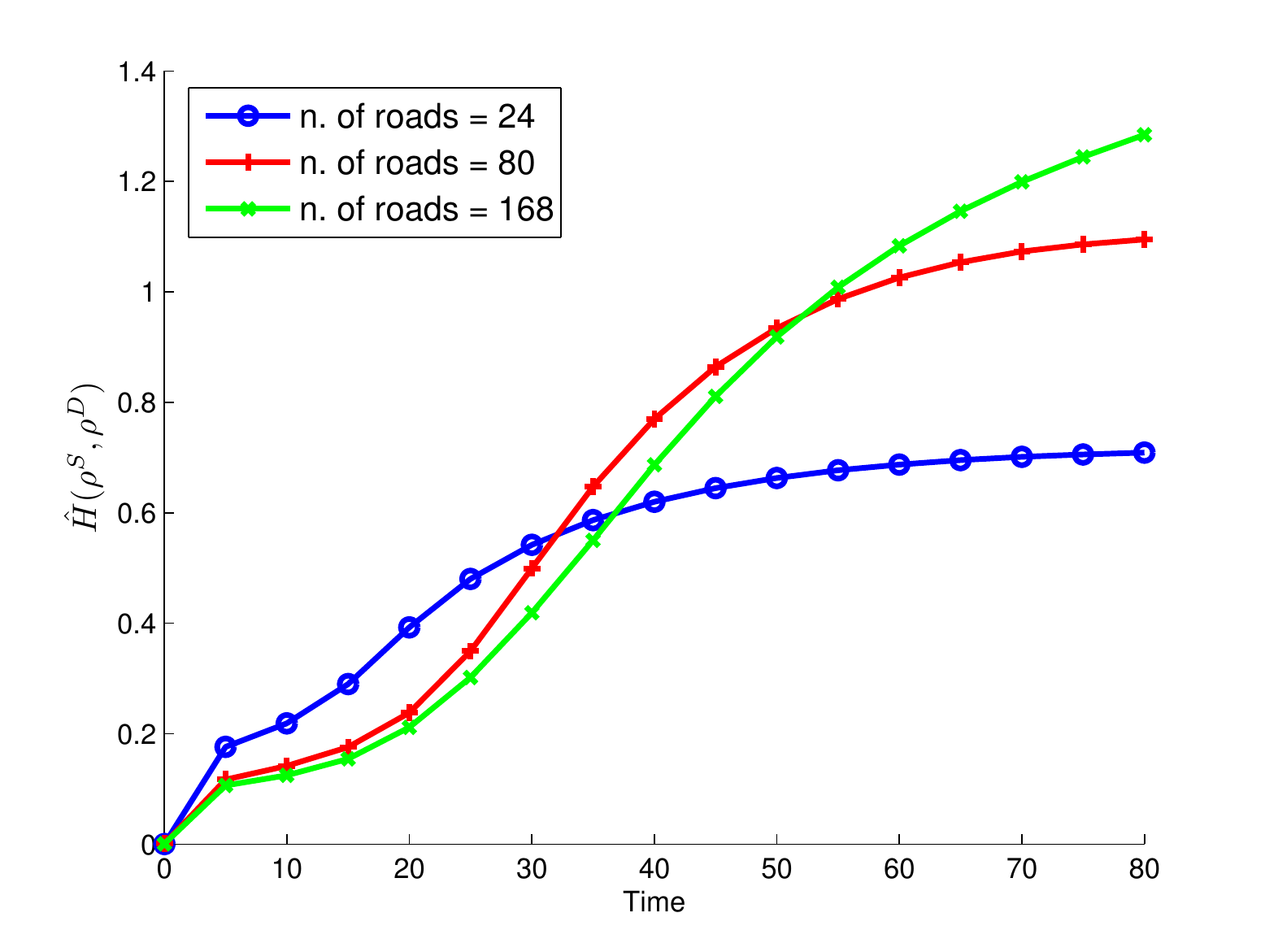}
}
\caption{Sensitivity to traffic distribution at junctions (all junctions). Density $\rho^\demand$ at time $t=55$ (left) and 
function \mbox{$t\to \Hnorm(\rho^\supply(\cdot,t),\rho^\demand(\cdot,t))$} for $\Nnl=3,5,7$ (right).} 
\label{fig:test3_all_alphas}
\end{figure}

\subsubsection{Comparison}\label{sec:comparison}
We observe a great difference between the density distributions $\rho^\demand$'s reported in Secs.\ \ref{sec:singlejunctions} and \ref{sec:alljunctions}. 
In Fig.\ \ref{fig:test3_rhoD}(right) we see that free and congested roads segregate but remain close to each other. 
On the contrary, in Fig.\ \ref{fig:test3_all_alphas}(left) free and congested roads segregate \emph{and separate spatially} from each other.
The Wasserstein distance is able to catch this difference. Indeed, in the former test the Wasserstein distance is almost independent of the network size (Fig.\ \ref{fig:test3_WT_alpha}(left)), while in the latter test (Fig.\ \ref{fig:test3_all_alphas}(right)) it is proportional to the network size (at large times).

%%%%%%%%%%%%%%%%%%%%%%%%%%%%%%%%%%%%%%%%%%%%%%%%%%%%%
%%%%%%%%%%%%%%%%%%%%%%% T4 %%%%%%%%%%%%%%%%%%%%%%%%%%
%%%%%%%%%%%%%%%%%%%%%%%%%%%%%%%%%%%%%%%%%%%%%%%%%%%%%
%Sensitivity to road closures
\subsection{Sensitivity to road network.}\label{sec:sens.network}
In this test we measure the sensitivity to the road network. 
The goal is to quantify the impact of a possible change in the network, specifically a road closure.

The parameters which remain fixed in this test are
\begin{itemize}
\item \emph{Initial density}: $\rho^0_{\edge,j} =0.3$, \quad $\edge\in\mathcal{E}$, $j=1,\ldots,\Nce$. 
\item \emph{Fundamental diagram}: $\sigma=0.3$ and $\fmax=0.25$.
\item \emph{Distribution matrix:} equidistributed along outgoing roads.
\end{itemize}

Supply distribution $\rho^\supply$ is obtained solving the equations on the complete network, while demand distribution $\rho^\demand$ is obtained by closing the central rightward road $\bar\edge$ (see, e.g., edge 11 in Fig.\ \ref{fig:manhattan_vuota}) just after the initial time, i.e.\ vehicles can come out of the road but none of them can enter. 
Note that, due to the symmetry of the network and the initial datum, $\rho^\supply\equiv 0.3$ for all $x$ and $t$.

In Fig.\ \ref{fig:test4}(left) we report the distribution $\rho^\demand$ at time $t=55$, to be compared with the constant distribution.
\begin{figure}[htp]
\centerline{
\begin{overpic}[width=0.45\textwidth]{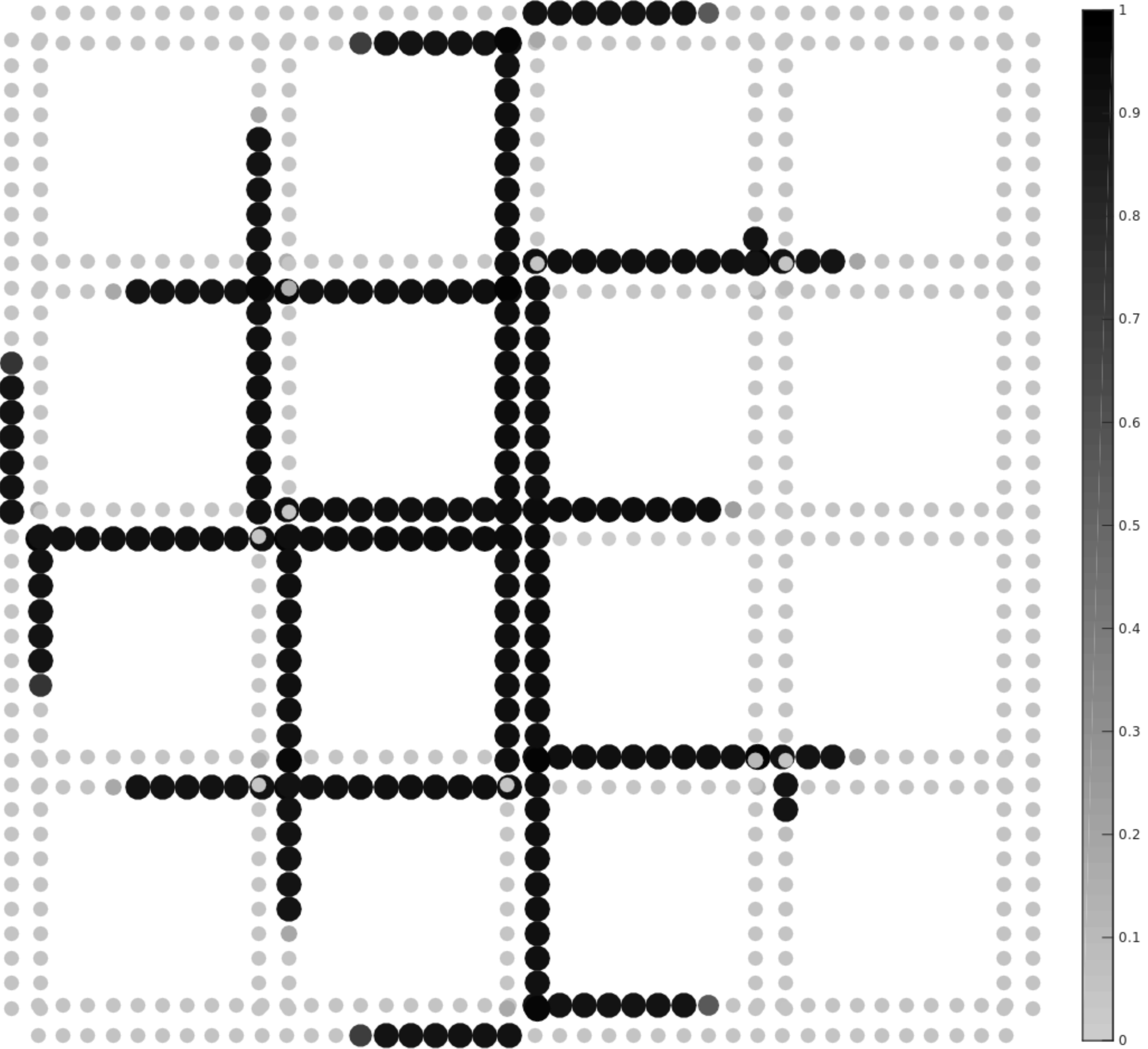}
\put(54,38){\noway}
\end{overpic}
\quad
\includegraphics[width=0.53\textwidth]{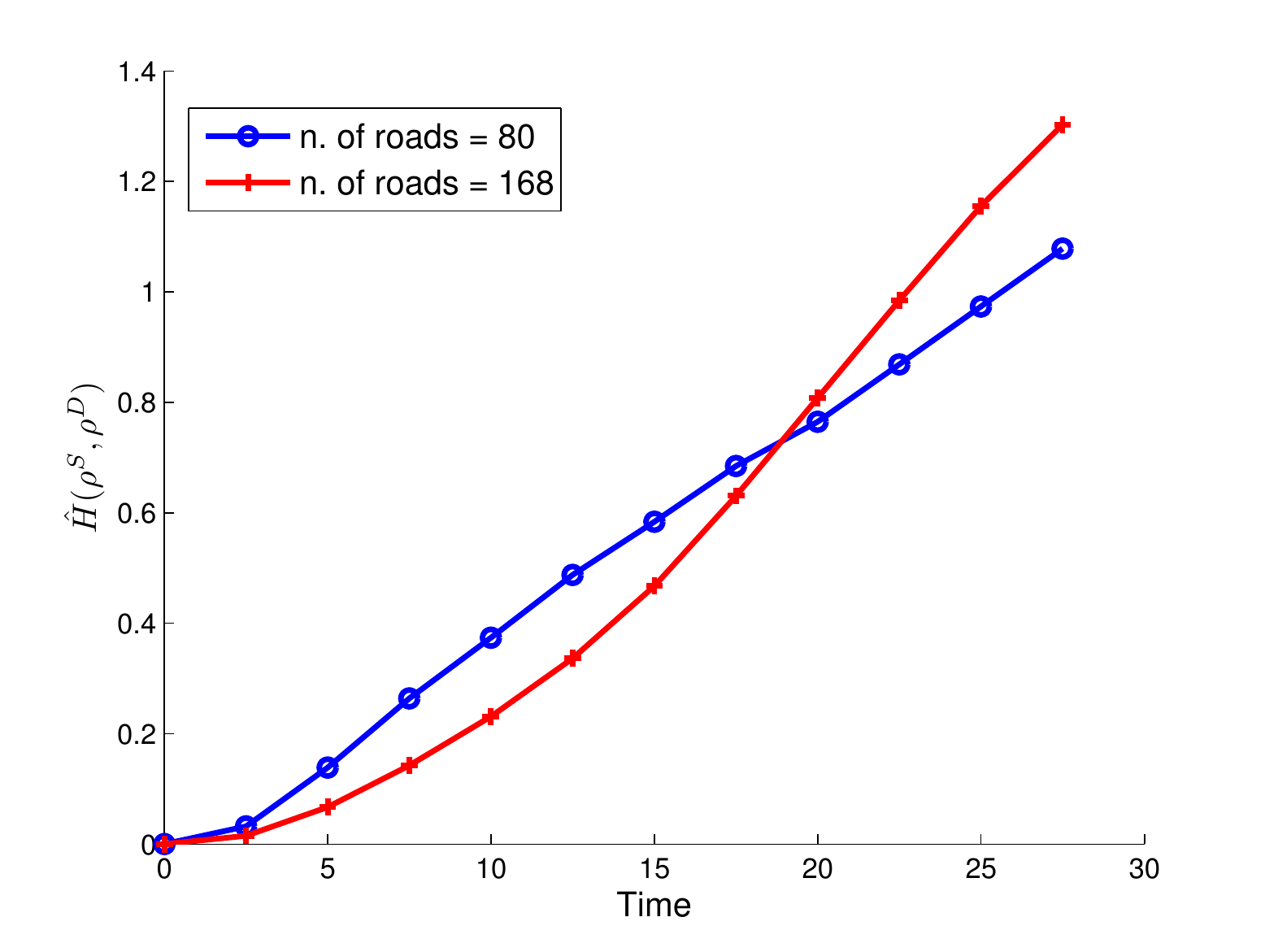}
}
\caption{Sensitivity to road network. Density $\rho^\demand$ at time $t=55$ (left) and 
function \mbox{$t\to \Hnorm(\rho^\supply(\cdot,t),\rho^\demand(\cdot,t))$} for $\Nnl=5,7$ (right).} 
\label{fig:test4}
\end{figure}
We see that the closure of a single road has a great impact on the solution. 
In Fig.\ \ref{fig:test4}(right) we show the distance between the two densities as a function of time. 
The long-time behavior of the sensitivity is proportional to the network size. 
Even if the road closure is a \emph{local} modification of the network dynamics, the behavior with respect to the network size is more similar to that shown in Fig.\ \ref{fig:test3_all_alphas}(right) (all junctions perturbation) than that shown in Fig.\ \ref{fig:test3_WT_alpha}(left) (single junction perturbation).
Again the reason can be found by observing the densities: Fig.\ \ref{fig:test4}(left) shows that the free and congested roads segregate and separate spatially from each other, as in Fig.\ \ref{fig:test3_all_alphas}(left).

\section{Conclusions.}\label{sec:conclusions}
In this paper we have studied the impact of various sources of error on the final output of the LWR model on large networks. The difference between two solutions was evaluated by means of the Wasserstein distance between the two. 
%The numerical computation of the Wasserstein distance was obtained by recasting the problem into the optimal mass transfer framework and getting a standard LP problem. 
The Wasserstein distance is indeed confirmed to be the right notion of distance to be used in this context, being able to catch the differences among distributions as natural intuition would suggest. In particular, \textit{it is the only distance which is able to quantify the difference between segregated distributions}.

Unfortunately, the numerical approximation of the Wasserstein distance on spaces other than the real line is not trivial. The LP-based method here proposed seems to be appropriate although the computational cost and memory requirements increase nonlinearly with the number of grid nodes. 

Wrong estimation of the position of vehicles at initial time does not seem to have a major impact on the final solution, at least for large times and small networks. Similar conclusions apply to wrong estimation of the fundamental diagram: errors on $\sigma$ or $\fmax$ have approximately the same impact on the final solution, and the discrepancy grows approximately linearly with respect to both $|\sigma^\demand-\sigma^\supply|$ and $|\fmax^\demand-\fmax^\supply|$. 

Conversely, wrong estimation of traffic distribution at junctions and road closures seem to have a far greater impact. Vehicles are redirected in the wrong direction at every passage across the junction, therefore the error grows in time.

Numerical investigation also shows that, in general, the sensitivity grows with the network size. Therefore we expect that LWR previsions based on real data become rapidly unusable on large networks.

\section*{Acknowledgments}
Authors want to thank Sheila Scialanga for the help in writing the numerical code and testing other than Wasserstein distances. 
Authors also thank Fabio Camilli and Simone Cacace for the useful discussions and wrong suggestions.

\appendix
\section{\REV{The numerical scheme for a $2\times 2$ junction [NEW SECTION]}}
We consider the case of a single vertex $\vertex$ with two incoming edges $\edge_1, \edge_2$ and two outgoing edges $\edge^\prime_1,\edge^\prime_2$. 

The total densities on $\edge_i$ and $\edge^\prime_i$, $i=1,2$, are defined by
\begin{equation}\label{App1}
\begin{array}{c}
\rho_{\edge_1,\Nceuno} := \mu_{\edge_1,\Nceuno}((\edge_1,\edge^\prime_1)),\vertex) + \mu_{\edge_1,\Nceuno}((\edge_1,\edge^\prime_2),\vertex),
\smallskip\\
\rho_{\edge_2,\Ncedue} := \mu_{\edge_2,\Ncedue}((\edge_2,\edge^\prime_1)),\vertex) + \mu_{\edge_2,\Ncedue}((\edge_2,\edge^\prime_2)),\vertex),
\smallskip\\
\rho_{\edge^\prime_1,1} := \mu_{\edge^\prime_1,1}((\edge_1,\edge^\prime_1)),\vertex) + \mu_{\edge^\prime_1,1}((\edge_2,\edge^\prime_1)),\vertex),
\smallskip\\
\rho_{\edge^\prime_2,1} := \mu_{\edge^\prime_2,1}((\edge_1,\edge^\prime_2)),\vertex) + \mu_{\edge^\prime_2,1}((\edge_2,\edge^\prime_2)),\vertex).
\end{array}
\end{equation}
Slightly simplifying the notation, system \eqref{MPloc_schema_before} is explicitly written as 
\footnotesize
\begin{equation*}
\begin{array}{c}
\mu_{\edge_1,\Nceuno}^{n+1}(\edge_1,\edge^\prime_1)=\mu_{\edge_1,\Nceuno}^n(\edge_1,\edge^\prime_1)-
\frac{\Dt}{\Dx}\Bigg( \frac{\mu_{\edge_1,\Nceuno}^n(\edge_1,\edge^\prime_1)}{\rho_{\edge_1,\Nceuno}^n} 
G\big(\rho_{\edge_1,\Nceuno}^n,\rho_{\edge^\prime_1,1}^n\big)- 
\alpha^\vertex_{\edge_1\edge^\prime_1} 
G\big(\rho_{\edge_1,\Nceuno-1}^n,\rho_{\edge_1,\Nceuno}^n\big)\Bigg),
\smallskip\\
\mu_{\edge_1,\Nceuno}^{n+1}(\edge_1,\edge^\prime_2)=\mu_{\edge_1,\Nceuno}^n(\edge_1,\edge^\prime_2)-
\frac{\Dt}{\Dx}\Bigg( \frac{\mu_{\edge_1,\Nceuno}^n(\edge_1,\edge^\prime_2)}{\rho_{\edge_1,\Nceuno}^n} 
G\big(\rho_{\edge_1,\Nceuno}^n,\rho_{\edge^\prime_2,1}^n\big)- 
\alpha^\vertex_{\edge_1\edge^\prime_2} 
G\big(\rho_{\edge_1,\Nceuno-1}^n,\rho_{\edge_1,\Nceuno}^n\big)\Bigg),
\smallskip\\
\mu_{\edge_2,\Ncedue}^{n+1}(\edge_2,\edge^\prime_1)=\mu_{\edge_2,\Ncedue}^n(\edge_2,\edge^\prime_1)-
\frac{\Dt}{\Dx}\Bigg( \frac{\mu_{\edge_2,\Ncedue}^n(\edge_2,\edge^\prime_1)}{\rho_{\edge_2,\Ncedue}^n} 
G\big(\rho_{\edge_2,\Ncedue}^n,\rho_{\edge^\prime_1,1}^n\big)- 
\alpha^\vertex_{\edge_2\edge^\prime_1} 
G\big(\rho_{\edge_2,\Ncedue-1}^n,\rho_{\edge_2,\Ncedue}^n\big)\Bigg),
\smallskip\\
\mu_{\edge_2,\Ncedue}^{n+1}(\edge_2,\edge^\prime_2)=\mu_{\edge_2,\Ncedue}^n(\edge_2,\edge^\prime_2)-
\frac{\Dt}{\Dx}\Bigg( \frac{\mu_{\edge_2,\Ncedue}^n(\edge_2,\edge^\prime_2)}{\rho_{\edge_2,\Ncedue}^n} 
G\big(\rho_{\edge_2,\Ncedue}^n,\rho_{\edge^\prime_2,1}^n\big)- 
\alpha^\vertex_{\edge_2\edge^\prime_2} 
G\big(\rho_{\edge_2,\Ncedue-1}^n,\rho_{\edge_2,\Ncedue}^n\big)\Bigg);
\end{array}
\end{equation*}
\normalsize
while system \eqref{MPloc_schema_after} is explicitly written as
\footnotesize
\begin{equation*}
\begin{array}{c}
\mu_{\edge^\prime_1,1}^{n+1}(\edge_1,\edge^\prime_1)=\mu_{\edge^\prime_1,1}^n(\edge_1,\edge^\prime_1)-
\frac{\Dt}{\Dx}\Bigg( \frac{\mu_{\edge^\prime_1,1}^n(\edge_1,\edge^\prime_1)}{\rho_{\edge^\prime_1,1}^n} 
G\big(\rho_{\edge^\prime_1,1}^n,\rho_{\edge^\prime_1,2}^n\big)- 
\frac{\mu_{\edge_1,\Nceuno}^n(\edge_1,\edge^\prime_1)}{\rho_{\edge_1,\Nceuno}^n} 
G\big(\rho_{\edge_1,\Nceuno}^n,\rho_{\edge^\prime_1,1}^n\big)\Bigg),
\smallskip\\
\mu_{\edge^\prime_1,1}^{n+1}(\edge_2,\edge^\prime_1)=\mu_{\edge^\prime_1,1}^n(\edge_2,\edge^\prime_1)-
\frac{\Dt}{\Dx}\Bigg( \frac{\mu_{\edge^\prime_1,1}^n(\edge_2,\edge^\prime_1)}{\rho_{\edge^\prime_1,1}^n} 
G\big(\rho_{\edge^\prime_1,1}^n,\rho_{\edge^\prime_1,2}^n\big)- 
\frac{\mu_{\edge_2,\Ncedue}^n(\edge_2,\edge^\prime_1)}{\rho_{\edge_2,\Ncedue}^n} 
G\big(\rho_{\edge_2,\Ncedue}^n,\rho_{\edge^\prime_1,1}^n\big)\Bigg),
\smallskip\\
\mu_{\edge^\prime_2,1}^{n+1}(\edge_1,\edge^\prime_2)=\mu_{\edge^\prime_2,1}^n(\edge_1,\edge^\prime_2)-
\frac{\Dt}{\Dx}\Bigg( \frac{\mu_{\edge^\prime_2,1}^n(\edge_1,\edge^\prime_2)}{\rho_{\edge^\prime_2,1}^n} 
G\big(\rho_{\edge^\prime_2,1}^n,\rho_{\edge^\prime_2,2}^n\big)- 
\frac{\mu_{\edge_1,\Nceuno}^n(\edge_1,\edge^\prime_2)}{\rho_{\edge_1,\Nceuno}^n} 
G\big(\rho_{\edge_1,\Nceuno}^n,\rho_{\edge^\prime_2,1}^n\big)\Bigg),
\smallskip\\
\mu_{\edge^\prime_2,1}^{n+1}(\edge_2,\edge^\prime_2)=\mu_{\edge^\prime_2,1}^n(\edge_2,\edge^\prime_2)-
\frac{\Dt}{\Dx}\Bigg( \frac{\mu_{\edge^\prime_2,1}^n(\edge_2,\edge^\prime_2)}{\rho_{\edge^\prime_2,1}^n} 
G\big(\rho_{\edge^\prime_2,1}^n,\rho_{\edge^\prime_2,2}^n\big)- 
\frac{\mu_{\edge_2,\Ncedue}^n(\edge_2,\edge^\prime_2)}{\rho_{\edge_2,\Ncedue}^n} 
G\big(\rho_{\edge_2,\Ncedue}^n,\rho_{\edge^\prime_2,1}^n\big)\Bigg).
\end{array}
\end{equation*}
\normalsize
To complete the computation, we sum the sub-densities following \eqref{App1}. Recalling that we have $\alpha_{\edge_1\edge^\prime_1}+\alpha_{\edge_1\edge^\prime_2}=1$ and $\alpha_{\edge_2\edge^\prime_1}+\alpha_{\edge_2\edge^\prime_2}=1$, we get 
\begin{multline*}
\rho_{\edge_1,\Nceuno}^{n+1} = \rho_{\edge_1,\Nceuno}^{n}-
\frac{\Dt}{\Dx}\Bigg( \frac{\mu_{\edge_1,\Nceuno}^n(\edge_1,\edge^\prime_1)G\big(\rho_{\edge_1,\Nceuno}^n,\rho_{\edge^\prime_1,1}^n\big) + \mu_{\edge_1,\Nceuno}^n(\edge_1,\edge^\prime_2)G\big(\rho_{\edge_1,\Nceuno}^n,\rho_{\edge^\prime_2,1}^n\big)}{\rho_{\edge_1,\Nceuno}^n} \\
 - G\big(\rho_{\edge_1,\Nceuno-1}^n,\rho_{\edge_1,\Nceuno}^n\big)\Bigg),
 \end{multline*}
%\smallskip\\
%\rho_{\edge_2,\Ncedue}^{n+1} = \rho_{\edge_2,\Ncedue}^{n}-
%\frac{\Dt}{\Dx}\Bigg( \frac{\mu_{\edge_2,\Ncedue}^n(\edge_2,\edge^\prime_1)G\big(\rho_{\edge_2,\Ncedue}^n,\rho_{\edge^\prime_1,1}^n\big)+\mu_{\edge_2,\Ncedue}^n(\edge_2,\edge^\prime_2) G\big(\rho_{\edge_2,\Ncedue}^n,\rho_{\edge^\prime_2,1}^n\big)   }{\rho_{\edge_2,\Ncedue}^n} 
% - G\big(\rho_{\edge_2,\Ncedue-1}^n,\rho_{\edge_2,\Ncedue}^n\big)\Bigg),
%\smallskip\\
\begin{multline*}
\rho_{\edge^\prime_1,1}^{n+1} = \rho_{\edge^\prime_1,1}^{n}-
\frac{\Dt}{\Dx}\Bigg( G\big(\rho_{\edge^\prime_1,1}^n,\rho_{\edge^\prime_1,2}^n\big)-  
\frac{\mu_{\edge_1,\Nceuno}^n(\edge_1,\edge^\prime_1)G\big(\rho_{\edge_1,\Nceuno}^n,\rho_{\edge^\prime_1,1}^n\big)}{\rho_{\edge_1,\Nceuno}^n} \\
-
\frac{\mu_{\edge_2,\Ncedue}^n(\edge_2,\edge^\prime_1)G\big(\rho_{\edge_2,\Ncedue}^n,\rho_{\edge^\prime_1,1}^n\big)}{\rho_{\edge_2,\Ncedue}^n} 
\Bigg),
%\smallskip\\
%\rho_{\edge^\prime_2,1}^{n+1} = \rho_{\edge^\prime_2,1}^{n}-
%\frac{\Dt}{\Dx}\Bigg(G\big(\rho_{\edge^\prime_2,1}^n,\rho_{\edge^\prime_2,2}^n\big)-
%\left(\frac{\mu_{\edge_1,\Nceuno}^n(\edge_1,\edge^\prime_2)G\big(\rho_{\edge_1,\Nceuno}^n,\rho_{\edge^\prime_2,1}^n\big)}{\rho_{\edge_1,\Nceuno}^n} 
%+
%\frac{\mu_{\edge_2,\Ncedue}^n(\edge_2,\edge^\prime_2)G\big(\rho_{\edge_2,\Ncedue}^n,\rho_{\edge^\prime_2,1}^n\big)}{\rho_{\edge_2,\Ncedue}^n} 
%\right)\Bigg).
\end{multline*}
and analogous expressions for $\rho_{\edge_2,\Ncedue}^{n+1}$ and $\rho_{\edge^\prime_2,1}^{n+1} $.

\normalsize


\begin{thebibliography}{}

\bibitem{bellomo2011SR}
N. Bellomo and C. Dogbe, 
\emph{On the modeling of traffic and crowds: A survey of models, speculations, and perspectives},
SIAM Rev., \textbf{53} (2011), 409--463.

\bibitem{bolley2005JHDE}
F. Bolley, Y. Brenier and G. Loeper,
Contractive metrics for scalar conservation laws, 
{\it Journal of Hyperbolic Differential Equations}, {\bf 2} (2005), 91--107. 

\bibitem{bressan2015NHM}
A. Bressan and K. T. Nguyen, 
Conservation law models for traffic flow on a network of roads,
{\it Netw. Heterog. Media}, {\bf 10} (2015), 255--293.

\bibitem{bretti2014DCDS-S}
G. Bretti, M. Briani and E. Cristiani, 
An easy-to-use algorithm for simulating traffic flow on networks: numerical experiments, 
{\it Discrete Contin. Dyn. Syst. Ser. S}, {\bf 7} (2014), 379--394.

\bibitem{briani2014NHM}
M. Briani and E. Cristiani, 
An easy-to-use algorithm for simulating traffic flow on networks: theoretical study, 
{\it Netw. Heterog. Media}, {\bf 9} (2014), 519--552.

\bibitem{coclite2005SIMA}
G. M. Coclite, M. Garavello and B. Piccoli,
Traffic flow on a road network,
{\it SIAM J. Math. Anal.}, {\bf 36} (2005), 1862--1886.

\bibitem{cristianibook}
E. Cristiani, B. Piccoli and A. Tosin,  
{\it Multiscale Modeling of Pedestrian Dynamics}, 
Modeling, Simulations \& Applications {\bf 12} (Springer, 2014).

\bibitem{cristiani2015NHM}
E. Cristiani and F. S. Priuli, 
A destination-preserving model for simulating Wardrop equilibria in traffic flow on networks, 
{\it Netw. Heterog. Media}, {\bf 10} (2015), 857--876.

\bibitem{cristiani2016NHM}
E. Cristiani and S. Sahu, 
On the micro-to-macro limit for first-order traffic flow models on networks, 
{\it Netw. Heterog. Media}, {\bf 11} (2016), 395--413.

\bibitem{difrancesco2015ARMA}
M. Di Francesco and M. D. Rosini, 
Rigorous derivation of nonlinear scalar conservation laws from follow-the-leader type models via many particle limit, 
{\it Arch. Rational Mech. Anal.}, {\bf 217} (2015), 831--871.

\bibitem{dijkstra1950NM}
E. W. Dijkstra, 
A note on two problems in connexion with graphs, 
{\it Numer. Math.}, {\bf 1} (1959), 269--271.

\bibitem{evansgangbo}
L. C. Evans and W. Gangbo,
Differential equations methods for the Monge--Kantorovich mass transfer problem,
{\it Mem. Amer. Math. Soc}, {\bf 137} No.\ 653 (1999).

\bibitem{fjordholm2016SINUM}
U. S. Fjordholm and S. Solem, 
Second-order convergence of monotone schemes for conservation laws, 
{\it SIAM J. Numer. Anal.}, {\bf 54} (2016), 1920--1945.

\bibitem{garavello2012DCDS-A}
M. Garavello and P. Goatin, 
The Cauchy problem at a node with buffer, 
{\it Discrete Contin. Dyn. Syst. Ser. A}, {\bf 32} (2012), 1915--1938.

\bibitem{piccolibook}
M. Garavello and B. Piccoli, 
{\it Traffic Flow on Networks}, 
AIMS on Applied Math. {\bf 1} (American Institute of Mathematical Sciences, 2006).

\bibitem{garavello2013bookchapt}
M. Garavello and B. Piccoli, 
A multibuffer model for LWR road networks, 
Advances in Dynamic Network Modeling in Complex Transportation Systems, 
Complex Networks and Dynamic Systems, Vol.\ 2, 2013, 143--161.

\bibitem{habermanbook}
R. Haberman, 
{\it Mathematical Models: Mechanical Vibrations, Population Dynamics and Traffic Flow}, 
(Prentice-Hall, Inc., Englewood Cliffs, New Jersey, USA, 1977). 

\bibitem{helbing2001RMP}
D. Helbing, 
Traffic and related self-driven many-particle systems, 
{\it Rev. Modern Phys.}, {\bf 73} (2001), 1067--1141.

\bibitem{herty2009NHM}
M. Herty, J.-P. Lebacque and S. Moutari,
A novel model for intersections of vehicular traffic flow,
{\it Netw. Heterog. Media}, {\bf 4}  (2009),  813--826.

\bibitem{hilliges1995TRB}
M. Hilliges and W. Weidlich,
A phenomenological model for dynamic traffic flow in networks,
{\it Transportation Research Part B},  {\bf29} (1995),  407--431.

\bibitem{hitchcock1941JMP}
F. L. Hitchcock, 
The distribution of a product from several sources to numerous localities, 
{\it J. Math. Phys.}, {\bf 20} (1941), 224--230.

\bibitem{holden1995SIMA}
H. Holden and N. H. Risebro,
A mathematical model of traffic flow on a network of unidirectional roads, 
{\it SIAM J. Math. Anal.}, {\bf 26} (1995), 999--1017.

\bibitem{iacominitesi}
E. Iacomini,
Un approccio numerico alla quantificazione dell'incertezza per i modelli di traffico veicolare su grandi reti,
Master Thesis, Sapienza, Universit\`a di Roma, Rome, Italy, 2016 (in Italian).

\bibitem{LW}
M. J. Lighthill and G. B. Whitham, 
On kinetic waves. II. Theory of traffic flows on long crowded roads, 
{\it Proc. Roy. Soc. Lond. A}, {\bf 229} (1955), 317--345.

\bibitem{mazon2015SIOPT}
J. M. Maz\'on, J. D. Rossi and J. Toledo, 
Optimal mass transport on metric graphs, 
{\it SIAM J. Optim.}, {\bf 25} (2015), 1609--1632.

\bibitem{nessyahu1992SINUM}
H. Nessyahu and E. Tadmor,
The convergence rate of approximate solutions for nonlinear scalar conservation laws,
{\it SIAM J. Numer. Anal.}, {\bf 29} (1992), 1505--1519.

\bibitem{R}
P. I. Richards, 
Shock waves on the highway, 
{\it Operations Res.}, {\bf 4} (1956), 42--51.

\bibitem{santabrogiobook}
\REV{F. Santambrogio,
{\it Optimal Transport for Applied Mathematicians. Calculus of Variations, PDEs, and Modeling}
(Birkh\"auser, 2015).}

\bibitem{sinhabook}
S. M. Sinha, 
{\it Mathematical Programming: Theory and Methods} 
(Elsevier Science \& Technology Books, 2006).

\bibitem{treleaven2014ACC}
K. Treleaven and E. Frazzoli, 
Computing Earth Mover’s Distances on a road map with applications to one-way vehicle sharing, 
2014 American Control Conference (ACC), June 4-6, 2014. Portland, Oregon, USA.

\bibitem{villanibook}
C. Villani,
{\it Optimal Transport: Old and New}
(Springer, 2009).

\end{thebibliography}
\end{document}